\def\nc{\newcommand}
\nc\inv{\operatorname{inv}}
\nc\Mal{\operatorname{Mal}}
\nc\MalPref{\operatorname{MalPref}}
\nc\sgn{\operatorname{sgn}}
\nc\matched{\operatorname{Matched}}
\nc\match{\operatorname{Match}}
\nc\PrefStruc{\operatorname{PrefStruc}}
\nc\dto[1]{\xrightarrow[#1]{d}}
\nc\eqd{\overset{d}{=}}
\nc\Var{\operatorname{Var}}
\nc\Cov{\operatorname{Cov}}
\nc\lat{\operatorname{Lat}}
\nc\elat{\operatorname{ELat}}
\nc\Res{\operatorname{Res}}
\nc\Fix{\operatorname{Fix}}
\nc\stab{\operatorname{StabMatch}}
\nc\Nbhd{\operatorname{Nbhd}}
\nc\Bnd{\operatorname{Bnd}}
\nc\gr{\operatorname{gr}}
\nc\id{\operatorname{id}}
\nc\LocPref{\operatorname{LocPref}}
\nc\LeftLat{\operatorname{LeftLat}}
\nc\RightLat{\operatorname{RightLat}}
\nc\offset{\operatorname{offset}}
\nc\MIN{\operatorname{MIN}}
\nc\MAX{\operatorname{MAX}}
\nc\CUT{\operatorname{CUT}}
\nc\wfav{\operatorname{WFAV}}
\nc\mfav{\operatorname{MFAV}}
\nc\empt{\operatorname{EMPTY}}
\nc\witness{\operatorname{WITNESS}}
\newcommand{\C}{{C_0}}
\nc\bbb[1]{\textnormal{\textlbrackdbl}{#1}\textnormal{\textrbrackdbl}}
\nc\bbr[1]{\textnormal{\textlbrackdbl}{#1}\rrparenthesis}
\nc\bbl[1]{\llparenthesis{#1}\textnormal{\textrbrackdbl}}
\nc\bbo[1]{\llparenthesis{#1}\rrparenthesis}
\nc\malefemale{\mathrel{\ooalign{$\male$\cr\hidewidth\raise-.25ex\hbox{$\female\mkern4.5mu$}\cr}}}
  \theoremstyle{plain}
    \newtheorem{thm}{Theorem}
    \newtheorem*{thm*}{Theorem}
    \newtheorem{prop}[thm]{Proposition}
    \newtheorem{lemma}[thm]{Lemma}
    \newtheorem{cor}[thm]{Corollary}
  \theoremstyle{definition}
  \theoremstyle{remark}
    \newtheorem*{rmk}{Remark}
\begin{document}
\title{Stable Matchings with Correlated Preferences}
\author[Christopher~Hoffman]{ \ Christopher~Hoffman}
	\address{Department of Mathematics,
	University of Washington, Seattle, WA 98195} 
	\email{hoffman@math.washington.edu}
	
	\author[Avi~Levy]
	{\ Avi~Levy}
	\address{Microsoft Corporation, Redmond, WA 98052} \email{avi.levy@microsoft.com}
	
	\author[Elchanan~Mossel]
	{Elchanan~Mossel}
	\address{Department of Mathematics,  
	Massachusetts Institute of Technology, 
	Cambridge, MA 02139} \email{elmos@mit.edu.}

\thanks{This work was supported in part by NSF DMS-1712701, DMS-1954059
ARO MURI W911NF1910217, Bush Faculty Fellowship ONR-N00014-20-1-2826 and 
Simons Investigator award (622132)}


\maketitle

 \begin{abstract}
 
 The stable matching problem has been the subject of intense theoretical and empirical study since the seminal 1962 paper by Gale and Shapley \cite{gale1962college}.  The number of stable matchings for different systems of preferences has been studied in many contexts, going back to Donald Knuth in the 1970s.
 In this paper, we consider a family of distributions defined by the Mallows permutations and show that with high probability the number of stable matchings for these preferences is exponential in the number of people.
  \end{abstract}

  \section{Introduction}

  Suppose there are $n$ men and $n$ women, each of whom ranks every member of the opposite gender in such a way that there are no ties.
  A matching (in which every woman is partnered with a distinct man) is stable if there does not exist a woman and a man, each of whom prefers the other to their partner in the matching.

  The Gale-Shapley algorithm \cite{gale1962college} constructs a stable matching for any set of rankings, proving the non-trivial result that there always exists such a matching. This is a fundamental result in the economic theory of stable allocations, a theory which appeared in the citation of the 2012 Nobel Prize in Economics awarded to Lloyd Shapley and Alvin E. Roth.

 \subsection{Number of Matchings}
 
  Two natural questions about the number of stable matchings are
 \begin{itemize} 
\item What is the largest possible number of matchings that a preference system with $n$ men and $n$ women can have?
\item   How many stable matchings do typical preference systems have?
\end{itemize}

There is a system with 2 men and 2 women that has 2 stable matchings.  It is easy to extend this to get a system with $2n$ men and $2n$ women with $2^{n}$ stable matchings. In their book on stable marriage \cite{gusfield1989stable}, Gusfield and Irving show that there are preferences (of $n$ men and $n$ women) with $2^n$ stable matchings. 
 This has been improved to show that the lower bound is at least approximately $2.28^n$ \cite{leather1986complexity}.  
  One expects this type of exponential growth for rankings that are globally, but not locally, correlated. The basic idea is that the stable matchings decompose into approximately independent blocks of constant size, and each block carries a constant number of stable matchings. 

The trivial upper bound on the number of stable matchings is $n!$. A recent result of
Karlin, Oveis Gharan and Weber has improved this to prove that there exists a constant $\C<\infty$ such that for any $n$ and any set of preferences the number of stable matchings is upper bounded by $\C^n$ \cite{karlin2018simply}.

The question about typical systems has typically been studied when the rankings are independent and uniformly distributed and as $n\to\infty$. In this case Pittel proved that the expected number of stable matchings is asymptotic to $e^{-1}n\log n$ \cite{pittel1989average}. He also proved that there are at least $n^{1/2-o(1)}$ stable matchings with high probability \cite{pittel1992likely}. Lennon and Pittel proved that the probability that there are at least $cn\log n$ stable matchings is bounded below by
  $$
    \frac{2e}{2e+1}\approx .84
  $$
  in the limit as $c\to 0$ \cite{lennon2009likely}. 
  It remains unknown whether there exists a constant $C$ such that the number of stable matchings is asymptotic to $Cn\log n$ with high probability.

Ashlagi, Kanoria and  Leshno considered unbalanced systems where, for example, there are $n$ men and $n+1$ women and preferences are independent and uniform. They proved in this situation
most men and women have a unique stable match and that the men rate their unique partner significantly higher than the women rate their partner.
\cite{Ashlagi2017unbalanced}. Pittel proved that the expected number of stable matches in such a system is $n/(e \log n)$ \cite{pittel2019likely}.

While these results about independent and uniform preferences are mathematically interesting, the choice of independent uniform preferences is far from what is actually observed in the real world. (For example, if a college applicant is ranked highly by Harvard then she is probably also ranked highly by Yale as well.)

In this paper we consider preference structures which are not uniform. 
A classical model for correlated permutations is given by the 
\textit{Mallows}-distribution~\cite{mallows1957non}. 
This model has been extensively used and analyzed as a model of data, see e.g.~\cite{braverman2009sorting,lu2014effective,liu2018efficiently} and follow up work. 
 We show that in this setting we get very different behavior than what Pittel observed. the expected number of stable matchings is exponential in the length of the permutation.

\subsection{Notation}
We use the following notation to describe the model formally. 
  An \textbf{interval of integers} is a set of the form $I\cap \m Z$ where $I\subseteq \m R$ is an interval. 
  We write $\bbr{a,b}:=[a,b)\cap \m Z$ and similarly for other intervals.

  For a (possibly infinite) interval $I\subseteq \m Z$, let $\Omega_I:=I\times \{\male,\female\}$ and let $S_I$ be the set of permutations of $I$ (i.e., bijections from $I$ to itself). 
  A \textbf{preference structure} $\mathcal P$ on $\Omega_I$ is a map from $\Omega_I$ to $S_I$. 
  Let $\PrefStruc_I:=(S_I)^{\Omega_I}$ denote the set of preference structures on $\Omega_I$. 
  For $i\in I$, we regard $\mathcal P\bigl((i,\female)\bigr)$ as a ranking of $I\times \{\male\}$, and we say that $(i,\female)$ \textbf{prefers} $(j,\male)$ to $(k,\male)$ if
  $$
    \mathcal P\bigl((i,\female)\bigr)(j)>\mathcal P\bigl((i,\female)\bigr)(k).
  $$
  Similarly, we say that $(i,\male)$ prefers $(j,\female)$ to $(k,\female)$ if
  $$
    \mathcal P\bigl((i,\male)\bigr)(j)>\mathcal P\bigl((i,\male)\bigr)(k).
  $$ 

  A \textbf{match} is an unordered pair $\{(i,\male),(j,\female)\}$ with $i,j\in I$, and the \textbf{partner} of an element of a match is the other element.
  A \textbf{matching} of $\Omega_I$ is a set of matches for which every element of $\Omega_I$ appears in exactly one match.
  (That is, we consider only perfect matchings, and not partial matchings.)
  Finally, a matching is \textbf{stable} with respect to $\mathcal P$ if there do not exist integers $i$ and $j$ such that $(i,\male)$ and $(j,\female)$ prefer one another to their respective partners in the matching. 
  When there is no risk of confusion, we refer to a matching of $\Omega_I$ that is stable with respect to $\mathcal P$ as a \textbf{stable matching of $\mathcal P$}. We write $\stab\mathcal P$ for the set of stable matchings of $\mathcal P$.
  In the special case when $I=\bbb{1,n}$, we write $S_n$, $\Omega_n$, and $\PrefStruc_n$ in place of $S_I$, $\Omega_I$, and $\PrefStruc_I$, respectively.

  Each ranking is identified with a permutation $\pi$ of $\{1,\ldots,n\}$ by declaring $\pi(i)<\pi(j)$ if $j$ is more desirable than $i$. 
  The Mallows measure $\Mal=\Mal_{q,n}$ with parameter $q\in (0,1)$ assigns to each $\pi$ a probability proportional to $q^{\inv(\pi)}$, where $\inv(\pi)$ is the \textbf{inversion number}
  \begin{equation} \label{escalation}
    \inv(\pi)=\#\bigl\{1\leq i<j\leq n\colon \pi(i)>\pi(j)\bigr\}.
  \end{equation}
  This probability is maximized for the identity permutation, and decays exponentially in the inversion number; thus a Mallows random permutation may be regarded as a perturbation of the identity.
  Hence we regard the rankings in our model as being `globally correlated' with the identity permutation, even though the rankings are independent of one another.

\subsection{Statement of Main Results}
  For a parameter $q\in (0,1)$ and an integer $n\geq 1$, let $\MalPref_{q,n}$ be a random element of $\PrefStruc_n$ assigning independent $\Mal_q$-distribution permutations to all individuals in $\Omega_n$. 
  In other words, the law of $\MalPref_{q,n}$ is the product measure $(\Mal_q)^{\Omega_n}$. We refer to $\MalPref_{q,n}$ as being a \textbf{Mallows preference structure}. 
  We prove that the number of stable matchings of $\MalPref_{q,n}$ grows exponentially in $n$.
  
  \begin{thm}\label{thm:exp}
    For all $q\in (0,1)$, there exists a constant $0<\gr_q<\infty$ such that
    $$
      \frac{\log\#\stab\MalPref_{q,n}}{n}\dto{n\to\infty}\gr_q.
    $$
    Furthermore, $\gr$ is a continuous function such that $\gr_q\to 0$ as $q\to 0$.
  \end{thm}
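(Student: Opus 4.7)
The plan is to establish convergence of $n^{-1}\log \#\stab \MalPref_{q,n}$ to a deterministic limit via an ergodic/subadditive argument on an infinite-volume Mallows preference structure, and then supplement it with matching lower and upper bounds forcing $0 < \gr_q < \infty$.

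First I would construct the infinite-volume Mallows preference structure on $\m Z$, which for $q \in (0,1)$ makes sense as a translation-invariant, ergodic probability measure on $\PrefStruc_{\m Z}$: assign each person an independent $\Mal_q$ permutation, using the fact that Mallows on $\m Z$ is well-defined when $q < 1$. The key technical step is the existence of \emph{cut points}: integers $k$ such that no stable matching pairs an element of index $\le k$ with one of index $>k$. Because a Mallows ranking has displacement $|\pi(i)-i|$ with exponentially decaying tails, I would show via a union bound over potential long-range blocking pairs that a cut point appears in any interval of length $L$ with probability at least $1-\epsilon(L)$, where $\epsilon(L)\to 0$. Once cut points occur with positive density, Kingman's subadditive ergodic theorem applied to $\log\#\stab$ of the preferences restricted between consecutive cuts yields a deterministic limit $\gr_q$, and a standard comparison coupling transfers the convergence back to the finite-volume model $\MalPref_{q,n}$.

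For the lower bound $\gr_q > 0$, I would exhibit a fixed, constant-size local configuration that, when sandwiched between cut points, produces at least two stable matchings: for instance, at consecutive indices $i,i+1$, arrange that both men rank their same-index partner above the neighbor while both women rank the neighbor above their same-index partner. Both the diagonal pairing and the swap are then locally stable. This pattern has probability $p(q)>0$ under $\Mal_q$, so the ergodic theorem supplies a positive density of cut-separated windows containing it, yielding $\#\stab \geq 2^{cn}$ for some $c = c(q) > 0$ with high probability. The upper bound $\gr_q \le \log\C < \infty$ is immediate from the worst-case bound $\#\stab \mathcal P \le \C^n$ of Karlin--Oveis Gharan--Weber cited in the introduction. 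For continuity of $q\mapsto \gr_q$, I would couple Mallows preferences at nearby parameters $q,q'$ so that they agree at all but an $O(|q-q'|)$ density of positions; combined with a deterministic sensitivity bound that changing one person's ranking alters $\log\#\stab$ by a bounded amount, this yields $|\gr_q - \gr_{q'}| = O(|q-q'|)$. For $q \to 0$ the Mallows measure concentrates on the identity, so with probability $1-O(q)$ each person ranks their same-index partner first; a quantitative union bound over rare non-identity configurations forces $\gr_q \to 0$.

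The main obstacle will be the cut-point step: although individual rankings are locally concentrated, a stable matching can in principle chain far-apart matches through the global combinatorics of stability, so ruling out blocking pairs that span a candidate cut requires carefully propagating the exponential Mallows tails through this combinatorics. This single ingredient underlies both the subadditivity driving the limit and the multiplicative structure producing the exponential lower bound.
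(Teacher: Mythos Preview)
Your overall strategy matches the paper's: build the infinite Mallows preference structure on $\m Z$, find cut points with positive density, apply an ergodic theorem, plant a $2\times 2$ gadget for $\gr_q>0$, and invoke Karlin--Oveis Gharan--Weber for $\gr_q<\infty$. There is, however, a genuine gap in the cut-point step, and it is exactly the part you flag as the main obstacle.

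You define a cut point as an index $k$ that no stable matching of the ambient interval crosses. This yields only one direction of the factorization: each stable matching of $\mathcal P_{\bbb{1,n}}$ restricts to stable matchings of $\mathcal P_{\bbb{1,k}}$ and $\mathcal P_{\bbl{k,n}}$, so $\#\stab\mathcal P_{\bbb{1,n}}\le\#\stab\mathcal P_{\bbb{1,k}}\cdot\#\stab\mathcal P_{\bbl{k,n}}$. For the lower bound $\gr_q>0$ you need the reverse inequality---that \emph{every} union of a stable matching on the left with one on the right is globally stable---so that the two choices in each window multiply. That is not implied by ``no stable matching crosses $k$'': one must separately rule out blocking pairs straddling $k$ for \emph{arbitrary} matchings assembled from locally stable pieces, not merely for the globally stable ones. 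The paper calls an index with this stronger property a \emph{lattice cutpoint} and devotes an entire section to a sufficient criterion; the argument is strictly harder than the no-crossing argument and requires replacing the displacement $|\pi(j)-j|$ by a monotone surrogate that cannot increase under restriction to subintervals. Relatedly, $\log\#\stab$ is not subadditive over arbitrary splits (a $2\times 2$ system can have two stable matchings while each $1\times 1$ half has one), so Kingman's theorem does not apply as stated; the paper instead applies the ordinary pointwise ergodic theorem to a local growth rate defined via the lattice cutpoints. A second, smaller gap: your continuity argument assumes that changing one person's ranking perturbs $\log\#\stab$ by $O(1)$, which is not a standard fact and would itself require proof. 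The paper argues continuity differently, writing $\gr_q$ as a sum over possible local neighborhoods $\Nbhd(\mathcal P,0)$, truncating at a size threshold, using continuity of the finite Mallows law on the bounded part, and controlling the tail uniformly in $q$ via the lattice-cutpoint estimates.
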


  The difficulty in analyzing stable matchings is that they are globally
  defined. The goal of our proof is to show that 
 given a preference structure on $\Omega_n$, we define an equivalence relation decomposing $\bbb{1,n}$ into intervals of integers in such a way that the stable matchings on the larger interval are determined by those on each equivalence class. To do so we define an infinite version of the matching problem which allows us to apply Ergodic Theory and then show how results for the infinite problem imply the finite $n$ result. 

The rest of the paper is structured as follows. In Section \ref{sec:ergo} we define the Mallows measure on permutations of ${\m Z}$ and we obtain results about the stable matchings by coupling preference structures on $\Omega_n$ with preference structures on $\Omega_{\m Z}$.    The latter is shift invariant which allows us to use the ergodic theorem. We also define the notion of an essential lattice cutpoint.  This notion will be critical to our arguments.
In Section \ref{sec:cutpoint} we give a criteria which allows us to break up a preference structure on $\Omega_n$ to preference structures on subintervals. In Section \ref{sec:positive} we show that this criteria works well with the Mallows measure and breaks the interval $\bbb{1,n}$ into linearly many subintervals. In Section \ref{sec:expo} we combine these results to prove our main theorem.



  
  \begin{rmk}
  The result of the current paper are part of Avi Levy's Ph.D. thesis~\cite{levy2017novel}. 
  \end{rmk}

  \section{Coupling with an ergodic system}\label{sec:ergo}

  In this section we couple the sequence of finite random preference structures $(\MalPref_{q,n})_{n\geq 1}$ with a random infinite preference structure.
  The latter is stationary and ergodic, and applying the ergodic theorem to it will imply the existence of the constant $\gr_q$ in the main theorem.

  For a (finite or infinite) set of integers $U\subseteq \m Z$, a permutation of $U$ is defined to be a bijection from $U$ to itself.
  If $T$ is a finite subset of $U$ and $\pi$ is a permutation of $U$, the permutation $\pi_T$ \textbf{induced} by $T$ is the unique permutation of $T$ satisfying
  $$
    \pi_T(s)<\pi_T(t)\iff \pi(s)<\pi(t),\qquad s,t\in T.
  $$
  In other words, the elements of $T$ are given the same relative rankings under $\pi$ and $\pi_T$.

  The Mallows measure $\Mal_{q,I}$ on permutations of a finite interval $I\subset\m Z$ assigns a probability proportional to $q^{\inv(\pi)}$ to each permutation $\pi$ of $I$, where  $\inv(\pi)$ was defined in \eqref{escalation}.
  We write $\MalPref_{q,I}$ for a random preference structure assigning independent $\Mal_{q,I}$-distributed permutations to the elements of $\Omega_I$.

  There is an infinite extension of the Mallows measure having the following properties.

  \begin{thm*}[\cite{gnedin2012two}]
    For all $q\in [0,1)$ there exists a measure $\Mal_{q,\m Z}$ on permutations of $\m Z$ such that if $\pi$ has law $\Mal_{q,\m Z}$, then for all finite intervals $I\subset \m Z$ the induced permutation $\pi_I$ has law $\Mal_{q,I}$. 
    Moreover, the following stationarity property holds:
    $$
      \bigl(\pi(i)\bigr)_{i\in\m Z}\eqd \bigl(\pi(i+1)-1\bigr)_{i\in\m Z}.
    $$
  \end{thm*}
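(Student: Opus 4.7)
The plan is to construct $\Mal_{q,\m Z}$ as the thermodynamic limit of the finite Mallows measures $\Mal_{q,[-N,N]}$ as $N\to\infty$. The first step is a consistency property: for any two finite intervals $J\subset I\subset\m Z$, the pushforward of $\Mal_{q,I}$ under the restriction map $\pi\mapsto\pi_J$ is exactly $\Mal_{q,J}$. I would prove this combinatorial identity by partitioning $S_I$ according to the induced permutation on $J$ and using the clean decomposition $\inv(\pi)=\inv(\pi_J)+\inv(\pi_{I\setminus J})+(\text{crossings})$, together with the $q$-factorial identities for the normalizing constants $Z_{q,I}$; alternatively, one may exhibit an insertion procedure for $\Mal_{q,I}$ (inserting values in increasing order with truncated geometric displacements) whose induced procedure on $J$ reproduces $\Mal_{q,J}$. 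Consistency then allows Kolmogorov's extension theorem to produce a probability measure on coherent families of finite induced permutations, equivalently a random linear order $\prec$ on $\m Z$.

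The second step is to upgrade this random linear order to a genuine bijection $\pi:\m Z\to\m Z$. The key input is a uniform displacement estimate: under $\Mal_{q,I}$, for every $i\in I$,
$$
  \Pr\bigl(|\pi(i)-i|\geq k\bigr)\leq C_q\,q^{c_q k},
$$
with constants $C_q,c_q$ independent of $I$ and $i$. This follows from the inversion-table presentation of Mallows, in which the statistics $\#\{j>i\colon \pi(j)<\pi(i)\}$ are independent truncated geometric variables with parameter $q$; the displacement $|\pi(i)-i|$ is therefore stochastically dominated by a sum of two independent geometric variables with integrable tails. Combined with Borel--Cantelli, this shows that almost surely, for every $j\in\m Z$ there is exactly one $i$ with $\pi(i)=j$, i.e.\ the limiting linear order is order-isomorphic to $(\m Z,<)$. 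Hence the measure is supported on honest bijections of $\m Z$, and by construction its restriction to any finite $I$ induces $\Mal_{q,I}$.

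Finally, stationarity is inherited from a symmetry at the finite level: if $\pi\sim\Mal_{q,[-N,N]}$, then the shifted permutation $i\mapsto\pi(i-1)+1$ has law $\Mal_{q,[-N+1,N+1]}$, since $\inv$ is invariant under joint translation of domain and range. Passing to the limit via the first two steps shows that $(\pi(i))_{i\in\m Z}$ and $(\pi(i+1)-1)_{i\in\m Z}$ have the same law under $\Mal_{q,\m Z}$. I expect the main obstacle to be the second step: Kolmogorov extension only specifies relative orders on finite sets, and a priori the limiting order could fail to be of type $(\m Z,<)$ (for example, by acquiring a smallest element or an accumulation point at infinity). The uniform displacement tail bound, which is exactly what degenerates in the $q\to 1$ limit where no $\Mal_{q,\m Z}$ exists, is the substantive ingredient that rules this out.
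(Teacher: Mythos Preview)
The paper does not prove this theorem at all: it is quoted verbatim from \cite{gnedin2012two} and used as a black box. There is therefore nothing in the paper to compare your argument against.

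That said, your outline is broadly the right shape for how the result is established, and in fact the paper itself later proves the uniform displacement estimate you invoke (Lemma~\ref{lem:offMalBound} together with Lemma~\ref{lem:diffLehm}(i) gives $\mathbb P(|\pi(j)-j|\geq t)\leq 2q^t$ uniformly in the ambient interval). One point deserves more care than you give it. After Kolmogorov extension you have only a random total order $\prec$ on $\m Z$, and your sentence ``for every $j\in\m Z$ there is exactly one $i$ with $\pi(i)=j$'' is circular as written, since $\pi(i)$ has not yet been defined as an integer. The clean fix is to \emph{define} $\pi(i):=i+L_{+-}(i)-L_{-+}(i)$, where $L_{+-}(i)=\#\{j>i\colon j\prec i\}$ and $L_{-+}(i)=\#\{j<i\colon i\prec j\}$ are almost surely finite by your tail bound; one then checks directly that this $\pi$ is a bijection $\m Z\to\m Z$ whose induced orders agree with $\prec$. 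This also resolves the shift ambiguity (any translate $\pi+c$ has the same induced orders, but only $c=0$ satisfies the displacement identity). With that patch, your three steps go through.
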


  We define $\MalPref_{q,\mathbb Z}$ to be a random preference structure assigning to each individual in $\Omega_{\m Z}$ an independent $\Mal_{q,\m Z}$-distributed permutation. 
  The law of $\MalPref_{q,\m Z}$ is $(\Mal_{q,\m Z})^{\Omega_{\m Z}}$.

  Let $I\subset \m Z$ be a finite interval of integers. 
  For a preference structure $\mathcal P$ on $\Omega_{\m Z}$, we define the preference structure $\mathcal P_{I}$ \textbf{induced} by $I$ to be the preference structure of $\Omega_{I}$ given by
  $$
    \mathcal P_{I}\bigl((k,\malefemale)\bigr)=\mathcal P\bigl((k,\malefemale)\bigr)_{I},\qquad k\in I,\ \malefemale\in\{\male,\female\}.
  $$
  
  Observe that for all finite intervals $I\subset \m Z$,
  $$
    \bigl(\MalPref_{q,\m Z}\bigr)_{I}\eqd\MalPref_{q,I}.
  $$
  In particular, taking $I=\bbb{1,n}$ yields a coupling of $(\MalPref_{q,n})_{n\geq 1}$ with $\MalPref_{q,\m Z}$.

  Let $\Theta$ map $\PrefStruc_{\m Z}$ to itself by relabeling person $i$ to person $i+1$. 
  More precisely,
  $$
    \Theta\mathcal P\bigl((i,\malefemale)\bigr)(j)=\mathcal P\bigl((i+1,\malefemale)\bigr)(j+1)-1,\qquad i,j\in\m Z,\ \malefemale\in\{\male,\female\}.
  $$

  \begin{lemma}\label{lem:erg}
    The triple $\bigl(\PrefStruc_{\m Z},\Theta,(\Mal_q)^{\Omega_{\m Z}}\bigr)$ is an ergodic measure-preserving system.
  \end{lemma}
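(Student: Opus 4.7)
The plan is to verify the two parts of the lemma separately: first that $\Theta$ preserves $\mu := (\Mal_q)^{\Omega_{\m Z}}$, and then that the system is ergodic. Both steps rest on the same two ingredients: the shift-stationarity of $\Mal_{q,\m Z}$ supplied by the cited Gnedin-Olshanski theorem, and the fact that $\mu$ is a product measure over persons in $\Omega_{\m Z}$, so preferences of distinct persons are independent.

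For measure-preservation I would reduce by a monotone class argument to finite-dimensional cylinder events $A$ depending only on $\mathcal P((i,\malefemale))$ for $(i,\malefemale)$ in a finite set $K \subset \Omega_{\m Z}$. Unpacking the definition, $\Theta^{-1}A$ is the event that, for each such person, the permutation of person $(i+1,\malefemale)$, shifted by one in both argument and value, lies in the prescribed set. Gnedin-Olshanski stationarity says a $\Mal_{q,\m Z}$-distributed permutation shifted in this way is again $\Mal_{q,\m Z}$-distributed, and independence across persons lets the probabilities multiply to $\mu(A)$.

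For ergodicity I would establish the stronger property that $\Theta$ is mixing, namely $\mu(A \cap \Theta^{-n}B) \to \mu(A)\mu(B)$ as $|n| \to \infty$, checked on a generating algebra of finite-person cylinder events. Suppose $A$ depends on preferences of persons in a finite set $K_A$ and $B$ on those in $K_B$. Unpacking the transformation, $\Theta^{-n}B$ lies in the $\sigma$-algebra generated by preferences of the persons obtained from $K_B$ by adding $n$ to the integer coordinate. For $|n|$ larger than the diameter of $K_A \cup K_B$, this shifted set is disjoint from $K_A$; by the product structure of $\mu$, the events $A$ and $\Theta^{-n}B$ are then independent, so $\mu(A \cap \Theta^{-n}B) = \mu(A)\mu(\Theta^{-n}B) = \mu(A)\mu(B)$ using measure-preservation. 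Density of the cylinder algebra plus the standard fact that mixing implies ergodicity finish the proof.

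The one subtlety to flag is that $\Theta$ acts on two indices simultaneously — the person index and the internal permutation index — so one might fear that $\Theta^{-n}B$ depends on far-away coordinates of a single permutation even when $n$ is small. The resolution is that at the person level each $\mathcal P((i,\malefemale))$ is a single $S_{\m Z}$-valued random variable, and distinct persons' permutations are independent; the internal reshuffling within each permutation is completely absorbed by the Gnedin-Olshanski stationarity that underlies the measure-preservation step. Consequently, disjointness at the person level genuinely gives independence at the event level, and the mixing argument goes through cleanly.
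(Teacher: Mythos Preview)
Your proposal is correct and follows essentially the same route as the paper: measure-preservation from Gnedin--Olshanski stationarity, and ergodicity from the fact that far-shifted cylinder events are independent by the product structure over persons. The only cosmetic difference is that you phrase the second step as mixing on a generating algebra, whereas the paper phrases it as a Kolmogorov $0$--$1$ law showing the invariant $\sigma$-algebra is trivial; the underlying mechanism is identical.
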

  That the system is measure-preserving follows from stationarity of $\Mal_{q,\m Z}$. The proof of ergodicity is a variant of the well-known proof of Kolmogorov's $0$-$1$ law \cite{kallenberg2006foundations}.  
  \begin{proof}
    Let $\mathcal P$ be a random preference structure with law $(\Mal_{q,\m Z})^{\Omega_{\m Z}}$. 
    Let $\mathcal I$ be the invariant $\sigma$-algebra, consisting of all events $A\subseteq \PrefStruc_{\m Z}$ satisfying $A=\Theta^{-1}A$. 
    We wish to show that $\mathcal I$ is trivial. For a finite interval of integers $I\subset \m Z$, consider the $\sigma$-algebra
    $$
      \mathcal F_I=\sigma\Bigl(\mathcal P\bigl((i,\malefemale)\bigr)\colon i\in I,\ \malefemale\in\bigl\{\male,\female\bigr\}\Bigr).
    $$
    Fix $n\geq 1$. 
    If $A\in \mathcal F_{\bbb{-n,n}}$, then for all $i\in\m Z$ we have that $\Theta^iA\in \mathcal F_{\bbb{-n+i,n+i}}$. 
    In particular, if $i>2n$ then $A$ is independent of $\Theta^iA$. 
    Thus the $\sigma$-algebra $\mathcal I\cap \mathcal F_{\bbb{-n,n}}$ is trivial. 
    The result now follows by taking $n\to\infty$ and using a standard approximation argument. 
  \end{proof}

  
  A map $F\colon \PrefStruc_{\m Z}\to \m R^{\m Z}$ satisfying
  $$
    F(\Theta\mathcal P)_i=F(\mathcal P)_{i+1},\qquad i\in \m Z,\ \mathcal P\in\PrefStruc_{\m Z},
  $$
  is called a \textbf{factor map}. 
  It intertwines $\Theta$ with the shift map on $\mathbb R^{\m Z}$.

  \begin{cor}\label{cor:prefStatErg}
    If $F\colon \PrefStruc_{\m Z}\to \m R^{\m Z}$ is a factor map and $\MalPref_{q,\m Z}$ is a random variable with law $(\Mal_q)^{\Omega_{\m Z}}$, then the random sequence $F(\MalPref_{q,\m Z})$ is stationary and ergodic.
  \end{cor}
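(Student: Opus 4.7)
The plan is to show that the corollary is essentially a formal consequence of Lemma~\ref{lem:erg} together with the intertwining property of a factor map. Concretely, let $\sigma\colon\m R^{\m Z}\to\m R^{\m Z}$ denote the left shift, $(\sigma x)_i = x_{i+1}$. The factor-map identity $F(\Theta\mathcal P)_i = F(\mathcal P)_{i+1}$ says exactly that $F\circ\Theta = \sigma\circ F$ pointwise on $\PrefStruc_{\m Z}$. Thus if $\mu$ denotes the law of $\MalPref_{q,\m Z}$ and $\nu := F_*\mu$ is the law of $F(\MalPref_{q,\m Z})$, the measure-preserving property of $(\PrefStruc_{\m Z},\Theta,\mu)$ from Lemma~\ref{lem:erg} gives
\[
  \sigma_*\nu = \sigma_*F_*\mu = (F\circ\Theta)_*\mu = F_*(\Theta_*\mu) = F_*\mu = \nu,
\]
which is precisely stationarity of $F(\MalPref_{q,\m Z})$.

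For ergodicity, I would argue by pulling shift-invariant events back through $F$. Suppose $B\subseteq\m R^{\m Z}$ is a Borel set with $\sigma^{-1}B = B$. Using $F\circ\Theta=\sigma\circ F$ again,
\[
  \Theta^{-1}(F^{-1}B) = (F\circ\Theta)^{-1}B = (\sigma\circ F)^{-1}B = F^{-1}(\sigma^{-1}B) = F^{-1}B,
\]
so $F^{-1}B$ lies in the invariant $\sigma$-algebra $\mathcal I$ on $\PrefStruc_{\m Z}$. By Lemma~\ref{lem:erg}, $\mu(F^{-1}B)\in\{0,1\}$, i.e.\ $\nu(B)\in\{0,1\}$. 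Hence $\nu$ is $\sigma$-ergodic, which is exactly the statement that the sequence $F(\MalPref_{q,\m Z})$ is ergodic.

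There is essentially no obstacle here: all the analytic content — the existence of the shift-invariant Mallows measure and the triviality of the invariant $\sigma$-algebra on $\PrefStruc_{\m Z}$ — has already been packaged into Lemma~\ref{lem:erg}. The only thing to verify is the elementary push-forward compatibility of $F$ with $\Theta$ and $\sigma$, which is immediate from the definition of a factor map. The one minor point worth flagging in a careful write-up is measurability of $F$, which should be verified (or assumed as part of the definition of factor map) so that $F^{-1}B$ is actually an element of the $\sigma$-algebra on which $\mu$ is defined.
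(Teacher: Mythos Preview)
Your proposal is correct and matches the paper's approach: the paper's proof is simply the one-line remark that the corollary is an immediate consequence of Lemma~\ref{lem:erg}, and what you have written is exactly the standard unpacking of that remark via the intertwining identity $F\circ\Theta=\sigma\circ F$.
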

  \begin{proof}
    This is an immediate consequence of the previous lemma.
  \end{proof}


  
  Combining the pointwise ergodic theorem (see \cite{kallenberg2006foundations}) with Lemma~\ref{lem:erg} implies the following. For any (product Borel) measurable function $f\colon \PrefStruc_{\m Z}\to \m R$ for which $f(\MalPref_{q,\m Z})$ is integrable, we have that
  \begin{equation}\label{eq:flim}
    \lim_{n\to\infty}\frac{1}{n}\sum_{i=1}^nf\bigl(\Theta^i\MalPref_{q,\m Z}\bigr)=\mathbb Ef\bigl(\MalPref_{q,\m Z}\bigr)\qquad a.s.
  \end{equation}
  In particular, taking $f=\bm{1}[A]$ for $A$ a measurable subset of $\PrefStruc_{\m Z}$ yields that
  \begin{equation}\label{eq:lim}
    \lim_{n\to\infty}\frac{\#\bigl\{1\leq i\leq n\colon \Theta^i\MalPref_{q,\m Z}\in A\bigr\}}{n}
    =\mathbb P\bigl(\MalPref_{q,\m Z}\in A\bigr)\qquad a.s.
  \end{equation}

\subsection{Lattice Cutpoints}  Preference structures whose stable matchings decompose at a given location play a crucial role in our argument. 
  Fix integers $i$, $j$, and $k$ with $i\leq j\leq k$ and a preference structure $\mathcal P$ on $\Omega_{\bbb{i,k}}$. 
  We say that $j$ is a \textbf{lattice cutpoint} of $\mathcal P$ if the set of matchings of $\Omega_{\bbb{i,k}}$ that are stable with respect to $\mathcal P$ coincides with the set of matchings arising as the union of a stable matching of $\mathcal P_{\bbb{i,j}}$ with a stable matching of $\mathcal P_{\bbl{j,k}}$. 
  
  Lattice cutpoints are our tool for counting the number of stable matchings, since if $i\leq j\leq k$ are integers and $j$ is a lattice cutpoint of $\mathcal P_{\bbb{i,k}}$ then
  \begin{equation}\label{eq:factorStab}
    \#\stab\mathcal P_{\bbb{i,k}}=\#\stab\mathcal P_{\bbb{i,j}}\cdot\#\stab\mathcal P_{\bbl{j,k}}.
  \end{equation}
  (In fact, there is a natural lattice structure on the set of stable matchings \cite{blair1988lattice}, and \eqref{eq:factorStab} generalizes to a lattice isomorphism between $\stab\mathcal P_{\bbb{i,k}}$ and the product of the lattices $\stab\mathcal P_{\bbb{i,j}}$ and $\stab\mathcal P_{\bbl{j,k}}$; hence the name.) 

  For a (possibly infinite) interval $I\subseteq \m Z$ and for a preference structure $\mathcal P$ on $\Omega_I$, we say that $j\in I$ is an \textbf{essential lattice cutpoint} of $\mathcal P$ if it is a lattice cutpoint of $\mathcal P_{\bbb{i,k}}$ for all $i,k\in I$ with $i\leq j\leq k$. 
  Note that only finite preference structures appear in the definition.
  Denote by $\elat(\mathcal P)$ the set of essential lattice cutpoints of $\mathcal P$. 

  The random set $\elat(\MalPref_{q,\m Z})$ plays a fundamental role in our argument. 
  We define 
  $$
    \rho_q:=\mathbb P\bigl(0\in \elat( \MalPref_{q,\m Z})\bigr).
  $$
  Observe that $\elat(\MalPref_{q,\m Z})$ is stationary and ergodic. 
  \begin{lemma}\label{lem:latDense}
    For all $q\in (0,1)$,
    $$
      \lim_{n\to\infty}\frac{\#\bigl(\elat(\MalPref_{q,\m Z})\cap \bbb{1,n}\bigr)}{n}=\rho_q\qquad a.s.
    $$
  \end{lemma}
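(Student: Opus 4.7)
The plan is to apply the pointwise ergodic theorem in the form of equation~\eqref{eq:lim} to the event
$$A := \bigl\{\mathcal P \in \PrefStruc_{\m Z} : 0 \in \elat(\mathcal P)\bigr\},$$
which by definition has $\mathbb P(\MalPref_{q,\m Z} \in A) = \rho_q$. To do this I need to verify two things: that $A$ is measurable, and that the set $\elat$ is equivariant under $\Theta$ so that $\Theta^i\mathcal P\in A$ is exactly the event $i\in\elat(\mathcal P)$.

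First I would check measurability of $A$. Unwinding the definition, $0 \in \elat(\mathcal P)$ iff for every pair of integers $i \leq 0 \leq k$, the index $0$ is a lattice cutpoint of $\mathcal P_{\bbb{i,k}}$. For each fixed such pair, being a lattice cutpoint is a property of the finite preference structure $\mathcal P_{\bbb{i,k}}$, which is a (measurable) function of $\mathcal P$ depending only on the restriction to $\bbb{i,k}$. Hence $A$ is a countable intersection of measurable events, and is therefore measurable.

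Next I would establish shift equivariance: $\elat(\Theta^i\mathcal P) = \elat(\mathcal P) - i$ for all $i \in \m Z$. The definition of $\Theta$ is simply a relabeling, sending person $j+1$ in $\mathcal P$ to person $j$ in $\Theta\mathcal P$ while preserving all relative rankings. Thus stability of matchings and hence the lattice cutpoint property transfer under the shift: $\ell$ is a lattice cutpoint of $(\Theta^i\mathcal P)_{\bbb{a,b}}$ iff $\ell+i$ is a lattice cutpoint of $\mathcal P_{\bbb{a+i,b+i}}$. Taking the intersection over all finite intervals containing $\ell$, we see that $\ell \in \elat(\Theta^i\mathcal P)$ iff $\ell + i \in \elat(\mathcal P)$. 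Specializing to $\ell = 0$ gives $\Theta^i\mathcal P \in A$ iff $i \in \elat(\mathcal P)$.

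Combining these observations, the numerator on the left side of \eqref{eq:lim} applied to $A$ becomes $\#\bigl(\elat(\MalPref_{q,\m Z}) \cap \bbb{1,n}\bigr)$, while the right side is $\rho_q$, yielding the lemma. I do not anticipate any serious obstacle; the only care needed is in the measurability argument and in checking the equivariance direction of the shift, both of which are routine once the definitions are unpacked.
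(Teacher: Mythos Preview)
Your proposal is correct and follows exactly the paper's approach: apply the ergodic theorem in the form \eqref{eq:lim} with $A=\{0\in\elat(\mathcal P)\}$. The paper's proof is a single line invoking \eqref{eq:lim}; you have simply spelled out the measurability and equivariance checks that the paper leaves implicit.
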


  \begin{proof}
    Apply the ergodic theorem in the form \eqref{eq:lim}, with $$A=\bigl\{0\in \elat(\mathcal \MalPref_{q,\m Z})\bigr\}.$$
  \end{proof}

  Essential lattice cutpoints allow us to define the following local quantities, for a preference structure $\mathcal P$ on $\Omega_{\m Z}$ and an integer $i$. 
  Let $\Nbhd(\mathcal P,i):=\bbl{\ell,r}$ where $\ell$ and $r$ are largest and smallest elements of the sets
  $$
    \bbr{-\infty,i}\cap \bigl(\elat(\mathcal P)\cup \{-\infty\}\bigr)\qquad\text{and}\qquad \bbb{i,\infty}\cap \bigl(\elat(\mathcal P)\cup \{\infty\}\bigr),
  $$
  respectively. 
  That is, $\Nbhd(\mathcal P,i)$ is the interval whose endpoints are the essential lattice cutpoints surrounding $i$, inclusive on the right and exclusive on the left. We write $\LocPref(\mathcal P,i)$ for the induced preference structure $\mathcal P_{\Nbhd(\mathcal P,i)}$, and we set
  \begin{equation}\label{eq:fP}
    \gr(\mathcal P,i):=\frac{\log\#\!\stab\LocPref(\mathcal P,i)}{\# \Nbhd(\mathcal P,i)},
  \end{equation}
  provided $\Nbhd(\mathcal P,i)$ is finite, and otherwise $\gr(\mathcal P,i):=0$. 
  This quantity is a local approximation at $i$ to the exponential growth rate of the number of stable matchings. 
  Observe that $\gr(\mathcal P,i)=\gr(\Theta^{-i}\mathcal P,0)$, from which it follows by earlier general remarks that the sequence
  $
    \bigl(\gr(\MalPref_{q,\m Z},i)\bigr)_{i\in\m Z}
  $
  is stationary and ergodic. 
  We also point out that by \cite{karlin2018simply} there exists $\C<\infty$ such that  
  \begin{equation}\label{eq:trivBndGr}
    \gr(\mathcal P,i)\leq \C.
  \end{equation}

  The next result is crucial for the proof of the main theorem.
  \begin{prop}\label{prop:main}
    For all $q\in (0,1)$ 
    such that 
    $\rho_q>0$, 
    $$
      \frac{\log\#\stab\MalPref_{q,n}}{n}\dto{n\to\infty}\mathbb E \gr(\MalPref_{q,\m Z},0).
    $$
  \end{prop}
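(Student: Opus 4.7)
The plan is to use the coupling $\MalPref_{q,n} \eqd (\MalPref_{q,\m Z})_{\bbb{1,n}}$ together with a cluster decomposition via essential lattice cutpoints, and then apply the pointwise ergodic theorem to the bounded stationary sequence $(\gr(\MalPref_{q,\m Z},i))_{i\in\m Z}$. Write $\mathcal P=\MalPref_{q,\m Z}$ and enumerate $\elat(\mathcal P)\cap\bbb{1,n}=\{j_1<\cdots<j_K\}$. Since $\rho_q>0$, Lemma~\ref{lem:latDense} gives $K/n\to\rho_q$ a.s., so in particular $K\to\infty$ a.s. By definition of essential lattice cutpoint, each $j_s$ is a lattice cutpoint of $\mathcal P_{\bbb{a,b}}$ for every $a\leq j_s\leq b$; iterating the factorization \eqref{eq:factorStab} (first at $j_1$ inside $\bbb{1,n}$, then at $j_2$ inside $\bbl{j_1,n}$, and so on) therefore yields
\[
\log\#\stab\mathcal P_{\bbb{1,n}}
=\log\#\stab\mathcal P_{\bbb{1,j_1}}
+\sum_{s=1}^{K-1}\log\#\stab\mathcal P_{\bbl{j_s,j_{s+1}}}
+\log\#\stab\mathcal P_{\bbl{j_K,n}}.
\]

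Next, I identify the middle sum with a partial sum of $\gr$-values. For any $i$ with $j_s<i\leq j_{s+1}$, the essential lattice cutpoints of $\mathcal P$ surrounding $i$ are precisely $j_s$ and $j_{s+1}$, so $\Nbhd(\mathcal P,i)=\bbl{j_s,j_{s+1}}$ and $\LocPref(\mathcal P,i)=\mathcal P_{\bbl{j_s,j_{s+1}}}$. Hence by \eqref{eq:fP}, $\gr(\mathcal P,i)=(j_{s+1}-j_s)^{-1}\log\#\stab\mathcal P_{\bbl{j_s,j_{s+1}}}$ is constant on $\bbl{j_s,j_{s+1}}$. Summing over the $j_{s+1}-j_s$ integers in this interval recovers $\log\#\stab\mathcal P_{\bbl{j_s,j_{s+1}}}$, so the middle sum equals $\sum_{i=j_1+1}^{j_K}\gr(\mathcal P,i)$. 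By Corollary~\ref{cor:prefStatErg}, $(\gr(\mathcal P,i))_{i\in\m Z}$ is stationary and ergodic; by \eqref{eq:trivBndGr} it is bounded; so the pointwise ergodic theorem gives $\frac{1}{n}\sum_{i=1}^n\gr(\mathcal P,i)\to\mathbb E\gr(\mathcal P,0)$ a.s.

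It remains to show the error contributions from $\bbb{1,j_1}$ and $\bbl{j_K,n}$ are $o(n)$ a.s. The variable $j_1=\min(\elat(\mathcal P)\cap\bbb{1,\infty})$ is a.s.\ finite (since $\elat(\mathcal P)$ is a.s.\ a set of positive density) and does not depend on $n$, so $j_1/n\to 0$. For the right end, fix $\epsilon>0$; applying Lemma~\ref{lem:latDense} at scales $n$ and $(1-\epsilon)n$ yields
\[
\frac{\#\bigl(\elat(\mathcal P)\cap\bbb{\lceil(1-\epsilon)n\rceil,n}\bigr)}{n}\;\longrightarrow\;\epsilon\rho_q>0\qquad\text{a.s.},
\]
so this set is nonempty for all sufficiently large $n$, giving $n-j_K<\epsilon n$ eventually a.s. Since $\epsilon>0$ was arbitrary, $(n-j_K)/n\to 0$ a.s. Combining with the Karlin--Oveis~Gharan--Weber bound $\log\#\stab\mathcal P_{\bbb{a,b}}\leq(b-a+1)\log\C$ from \cite{karlin2018simply} and the bound $\gr\leq\C$ from \eqref{eq:trivBndGr}, all four error contributions (the two boundary $\log\#\stab$ factors and the two boundary pieces of the $\gr$-sum) divided by $n$ vanish a.s. Therefore $n^{-1}\log\#\stab\mathcal P_{\bbb{1,n}}\to\mathbb E\gr(\mathcal P,0)$ a.s., and the proposition follows because a.s.\ convergence implies convergence in distribution and $\mathcal P_{\bbb{1,n}}\eqd\MalPref_{q,n}$.

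The main obstacle I expect is the treatment of the right-boundary overshoot $n-j_K$: unlike $j_1$, it is not a single a.s.-finite random variable independent of $n$, and establishing $(n-j_K)/n\to 0$ requires the small-window application of the ergodic density estimate shown above rather than mere finiteness of an ``age at $0$'' variable. Everything else is essentially bookkeeping around the factorization and a direct appeal to the ergodic theorem.
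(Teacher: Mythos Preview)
Your proof is correct and follows the same strategy as the paper: decompose $\log\#\stab\mathcal P_{\bbb{1,n}}$ via the essential lattice cutpoints $j_1<\cdots<j_K$ in $\bbb{1,n}$, identify the bulk with $\sum_{i=j_1+1}^{j_K}\gr(\mathcal P,i)$, apply the pointwise ergodic theorem to the bounded stationary sequence $\gr(\mathcal P,i)$, and control the two boundary pieces using the Karlin--Oveis~Gharan--Weber bound.

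The one substantive difference is your treatment of the right boundary. The paper uses stationarity to write $\#\Nbhd(\mathcal P,n)\eqd\#\Nbhd(\mathcal P,1)$, an a.s.\ finite variable, and concludes only that $\tfrac{1}{n}\bigl|X_n-\sum_{i=1}^n\gr(\mathcal P,i)\bigr|\to 0$ in probability, hence $X_n/n$ converges in distribution. Your two-scale application of Lemma~\ref{lem:latDense} (at $n$ and at $\lfloor(1-\epsilon)n\rfloor$) shows that $\elat(\mathcal P)\cap\bbl{(1-\epsilon)n,n}$ is eventually nonempty for every $\epsilon>0$, so $(n-j_K)/n\to 0$ almost surely. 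This is a genuine strengthening: you obtain $X_n/n\to\mathbb E\gr(\mathcal P,0)$ a.s., not merely in distribution. Both arguments rest on the same decomposition and the same appeal to the ergodic theorem; yours simply extracts more from the ergodic density estimate at the right edge.
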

  Thus, the constant $\gr_q$ in the main theorem is given by $\mathbb E \gr(\MalPref_{q,\m Z},0)$, whenever the above hypotheses hold. 
  Once we show that $\rho_q>0$ and $0<\gr_q<\infty$ for all $q\in (0,1)$ and that these functions are continuous, the main theorem will follow. 
  The proofs of these assertions are non-trivial and they will occupy the remaining sections.
  \begin{proof}[Proof of Proposition~\ref{prop:main}]
    Fix $q\in (0,1)$ satisfying the hypotheses. 
    Let $\mathcal P=\MalPref_{q,\m Z}$, let $X_n=\log\#\!\stab\mathcal P_{\bbb{1,n}}$, and let $Y_n=\sum_{i=1}^{n}\gr(\mathcal P,i)$. 
    By the ergodic theorem, $\lim_{n\to\infty}\tfrac{1}{n}Y_n$ converges to a constant a.s. We define $\gr_q=\lim_{n\to\infty}\tfrac{1}{n}Y_n$ to be this a.s.\ limit.

    We show that $\tfrac{1}{n}|Y_n-X_n|$ converges to $0$ in probability. 
    Let $$N=\min \bigl(\elat(\mathcal P)\cap \bbr{1,\infty}\bigr).$$ 
    Define the random variables $(M_n)_{n\geq 1}$ via
    $$
      M_n:=\begin{cases}
        n-\max\bigl(\elat(\mathcal P)\cap \bbb{1,n}\bigr),& N\leq n\\
        0,& N>n.
      \end{cases}
    $$
    On the event $N\leq n$, the integers $N$ and $n-M_n$ are the minimal and maximal elements of $\elat(\mathcal P)\cap\bbb{1,n}$, respectively.  
    The quantity $\gr(\mathcal P,i)$ is constant for $i\in\bbb{1,N}$ and for $i\in \bbl{n-M_n,n}$. 
    Set $X_n'=\log\#\!\stab\mathcal P_{\bbl{N,n-M_n}}$. 
    By \eqref{eq:factorStab},
    \begin{align*}
      X_n&=\log\#\!\stab\mathcal P_{\bbb{1,N}}+X_n'+\log\#\!\stab\mathcal P_{\bbl{n-M_n,n}}\\
      Y_n&=N\gr(\mathcal P,1)+X_n'+M_n \gr(\mathcal P,n).
    \end{align*}
    By the last equations and \eqref{eq:trivBndGr},
    \begin{equation}\label{eq:diffBound}
      |X_n-Y_n|\leq CN+CM_n 
      \leq 2\max\Bigl(\C\bigl(\#\Nbhd(\mathcal P,1)\bigr),\C\bigl(\#\Nbhd(\mathcal P,n)\bigr)\Bigr).
    \end{equation}
    While the bound \eqref{eq:diffBound} was obtained on the event $N\leq n$, it holds trivially on the event $N>n$ as well. 
    Thus for any $\epsilon>0$, we have that
    \begin{align}
      \mathbb P\bigl(|X_n-Y_n|>\epsilon n\bigr)
        &\leq \mathbb P\Bigl(\C\bigl(\#\Nbhd(\mathcal P,1)\bigr)>\tfrac{\epsilon n}{2}\Bigr)
        +\mathbb P\Bigl(\C\bigl(\#\Nbhd(\mathcal P,n)\bigr)>\tfrac{\epsilon n}{2}\Bigr)\nonumber\\
      &=2\mathbb P\Bigl(\C\bigl(\#\Nbhd(\mathcal P,1)\bigr)>\tfrac{\epsilon n}{2}\Bigr).\label{eq:XnYn}
    \end{align}
    Combining the hypothesis $\rho_q>0$ with Lemma~\ref{lem:latDense}, we see that $\#\Nbhd(\mathcal P,1)$ is a.s.\ finite. 
    Thus as $n\to\infty$, the probability in \eqref{eq:XnYn} tends to $0$, implying that $\tfrac{1}{n}|X_n-Y_n|$ converges to $0$ in probability as $n\to\infty$. 
    When combined with the a.s.\ convergence of $\tfrac{1}{n}Y_n$ to $\gr_q$, it follows that $\tfrac{1}{n}X_n$ converges in distribution to $\gr_q$.
  \end{proof}


  \section{Cutpoint criteria}\label{sec:cutpoint}

    Essential lattice cutpoints of preference structures were introduced for the purpose of splitting the task of enumerating stable matchings into smaller independent parts. 
    A preference structure $\mathcal P\in \PrefStruc_{\m Z}$ induces an  equivalence relation on $\m Z$ by taking the equivalence classes to be the intervals between consecutive essential lattice cutpoints; this is the equivalence relation referred to in the introduction. 
    For any finite interval of integers $I$ that is a union of equivalence classes, the number of stable matchings of the induced preference structure $\mathcal P_I$ factors as a product of the number of stable matchings on each of the equivalence classes contained in $I$. 
    Thus if we can locate essential lattice cutpoints, we will be able to relate the exponential growth rate of the number of stable matchings to a local quantity.

    We show that essential lattice cutpoints can be detected by a certain family of inequalities for the preference structure.  
    In fact, we start by showing that simpler inequalities yield a type of cutpoint weaker than the essential lattice cutpoint, defined as follows. 
    Given integers $i\leq j\leq k$ and a matching $M$ of $\Omega_{\bbb{i,k}}$, we say that $j$ is a \textbf{cutpoint} of $M$ if there do not exist integers $a$ and $b$ with $i\leq a\leq j$ and $j+1\leq b\leq k$ such that either $(a,\female)$ and $(b,\male)$ are matched, or $(a,\male)$ and $(b,\female)$ are matched. 
    It follows from the definitions that if $j$ is an essential lattice cutpoint of a preference structure $\mathcal P$ on some interval $I\ni j$, then for all $i,k\in I$ with $i\leq j\leq k$ we have that $j$ is a cutpoint of the induced preference structure $\mathcal P_{\bbb{i,k}}$. 

  \begin{lemma}\label{lem:cutBound}
    Let $I\subset \m Z$ be a finite interval, let $\mathcal P$ be a preference structure on $\Omega_I$, and let $s\in\m Z+\tfrac12$ be a half-integer. 
    If for all $i,j\in I$ and both \textup{$\malefemale\in \{\male,\female\}$}
    \begin{equation}\label{eq:bndForCut}
      \Bigl|\mathcal P\bigl((i,\textup{$\malefemale$})\bigr)(j)-j\Bigr|<\frac{|i-s|+|j-s|}{20},
    \end{equation}
    then $\lfloor s\rfloor$ is a cutpoint of every stable matching of $\mathcal P$.
  \end{lemma}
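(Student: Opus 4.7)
The plan is to argue by contradiction: suppose $M$ is a stable matching of $\mathcal P$ but $j := \lfloor s\rfloor$ is not a cutpoint of $M$, and exhibit a blocking pair. Write $L := \bbb{i,j}$ and $R := \bbl{j,k}$ for the two halves of the interval in question. A counting observation shows that if $j$ is not a cutpoint then $M$ contains at least one edge of each crossing type: at least one ``F-to-right'' crossing $\{(a,\female),(b,\male)\}$ with $a\in L$, $b\in R$, and at least one ``M-to-right'' crossing $\{(a,\male),(b,\female)\}$ with $a\in L$, $b\in R$. (The numbers of these two types are equal, both being $|L|$ minus the number of $M$-edges lying entirely in $L$.)

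Next, let $r^*:=\min\{r\in R:(r,\male)\text{ or }(r,\female)\text{ is matched under }M\text{ with a partner in }L\}$, which exists by the previous step. By the $\male\leftrightarrow\female$ symmetry of the hypothesis we may assume $(r^*,\male)$ is matched with some $(m,\female)$, $m\in L$. I first handle the \emph{easy case}: if also $(r^*,\female)$ is matched with some $(m',\male)\in L$, then $\{(r^*,\male),(r^*,\female)\}$ is a blocking pair. Indeed, the hypothesis gives
\[
\mathcal P((r^*,\male))(r^*)>r^*-\tfrac{r^*-s}{10},\qquad \mathcal P((r^*,\male))(m)<m+\tfrac{(r^*-s)+(s-m)}{20},
\]
and direct algebra reduces the desired preference $\mathcal P((r^*,\male))(r^*)>\mathcal P((r^*,\male))(m)$ to the inequality $17r^*+2s>19m$, which holds because $s\in\Z+\tfrac12$ forces $r^*>s>m$ strictly, so $17r^*+2s>17s+2s=19s>19m$. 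The symmetric calculation shows $(r^*,\female)$ prefers $(r^*,\male)$ over $(m',\male)$, contradicting stability.

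The remaining (hard) case --- when $(r^*,\female)$ is matched with some $(x,\male)\in R\setminus\{(r^*,\male)\}$ --- is the main obstacle. Here the minimality of $r^*$ forces every index in $(j,r^*)$ to have both its male and female matched on the right, which tightly constrains how the other crossing (which we know exists) can be arranged. The plan is to examine the cycle through $(r^*,\male)$ in the symmetric difference $M\triangle D$, where $D$ is the diagonal matching $\{\{(i,\male),(i,\female)\}\}_{i\in I}$: this cycle must cross $j$ an even number of times, and at a local maximum $p$ along a ``right-arc'' of the cycle one argues that $\{(p,\male),(p,\female)\}$ is a blocking pair by the same sort of algebraic calculation as in the easy case. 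Verifying this universally requires exploiting not only the continuous bound $\tfrac{1}{20}$ but also that the rankings are integer-valued and form a genuine permutation of $I$; the choice of the constant $\tfrac{1}{20}$ in the hypothesis is tuned precisely to make this final step go through in all configurations.
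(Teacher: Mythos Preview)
Your easy case is fine, but the hard case has a genuine gap. The diagonal blocking-pair calculation you carry out for $r^*$ relies crucially on the fact that $r^*>s>m$: the inequality $17r^*+2s>19m$ is equivalent to $m-s<\tfrac{17}{19}(r^*-s)$, which holds automatically because $m-s<0$. At a generic local maximum $p$ of a cycle of $M\triangle D$ lying in $R$, however, the $M$-partners of $(p,\male)$ and $(p,\female)$ need only have index smaller than $p$; they may well lie in $R$ with index close to $p$. Concretely, take $s=\tfrac12$, $p=100$, and suppose $(p,\male)$ is matched to $(95,\female)$: the hypothesis only gives $\mathcal P\bigl((p,\male)\bigr)(p)>100-9.95$ and $\mathcal P\bigl((p,\male)\bigr)(95)<95+9.7$, which does not force $(p,\male)$ to prefer $(p,\female)$, and integrality does not rescue this. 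So ``the same sort of algebraic calculation'' simply does not go through at such a local maximum, and no choice of the constant $\tfrac{1}{20}$ (or any positive constant) would make it go through, because the ratio $(m-s)/(p-s)$ can be made arbitrarily close to $1$. Your sketch gives no indication of how to overcome this.

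The paper's argument is structurally different and avoids this obstacle. Rather than seeking an explicit diagonal blocking pair, it runs an \emph{escalation} argument: one defines $\mathcal S$ to be the set of matched pairs $(i,j)$ with $i>s$ and $i-s>3(j-s)$ (so every crossing match lies in $\mathcal S$), and takes $(i,j)\in\mathcal S$ with $i$ maximal. Applying the crossover Lemma~\ref{lem:crossOver} at $t=\tfrac23 i+\tfrac13 s$ produces another match $\{(i',\male),(j',\female)\}$ with $j'<t<i'$; the cutpoint bound shows $(i,\female)$ prefers $(i',\male)$ to her partner $(j,\male)$, so by stability $(i',\male)$ must prefer his partner $(j',\female)$ to $(i,\female)$, and a second application of the bound then forces $i'-s>\tfrac52(i-s)$. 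This yields $(i',j')\in\mathcal S$ with $i'>i$, contradicting maximality. The point is that stability is used to \emph{propagate} to a strictly longer crossing further from $s$, not to produce a blocker directly; finiteness of $I$ is what ultimately gives the contradiction.
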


  We say that $\mathcal P$ satisfies the \textbf{cutpoint bound} at location $s$ whenever \eqref{eq:bndForCut} holds for all $i,j\in I$ and both $\malefemale\in \{\male,\female\}$. 
  Before presenting the proof, we present a simple result.

  \begin{lemma}\label{lem:crossOver}
    Let $I\subset \m Z$ be a finite interval and let $M$ be a (perfect) matching of $\Omega_I$. 
    Given a match \textup{$\{(a,\male),(b,\female)\}$} in $M$ with $a<b$ and given any real number $t\in (a,b)$, there exist integers $a'$ and $b'$ with $a'<t<b'$ such that the match \textup{$\{(a',\female),(b',\male)\}$} is also in $M$.
  \end{lemma}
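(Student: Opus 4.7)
The plan is a counting/parity argument tracking how matches of $M$ cross the cut at $t$; stability of $M$ plays no role, only that $M$ is a perfect matching. I would set $L:=\{n\in I:n<t\}$ and $R:=\{n\in I:n>t\}$, so that (when $t\notin\m Z$) the sets $L$ and $R$ partition $I$. Each match of $M$ then falls into exactly one of four categories: it lies entirely inside $\Omega_L$, it lies entirely inside $\Omega_R$, it is an \emph{MF-crossing} (a male in $L$ paired with a female in $R$), or it is an \emph{FM-crossing} (a female in $L$ paired with a male in $R$). The given match $\{(a,\male),(b,\female)\}$ is an MF-crossing, and the goal is to produce at least one FM-crossing.

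The core step is the identity that the numbers of MF-crossings and FM-crossings agree. Let $x$ be the number of matches lying entirely inside $\Omega_L$, let $C$ be the number of MF-crossings, and let $D$ be the number of FM-crossings. Counting the $|L|$ males of $\Omega_L$ according to their partners gives $x+C=|L|$, since each such male is paired with a female either in $L$ or in $R$. Counting the $|L|$ females of $\Omega_L$ the same way gives $x+D=|L|$, because matches lying inside $\Omega_L$ contribute the same $x$ whether counted from the male or the female side. Subtracting yields $C=D$. Since $C\geq 1$ by hypothesis, there exists at least one FM-crossing $\{(a',\female),(b',\male)\}$ with $a'\in L$ and $b'\in R$, which is the desired match with $a'<t<b'$.

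The only caveat is the boundary case in which $t$ is itself an integer of $I$, so that the element $t$ lies in neither $L$ nor $R$; then the identity $C=D$ can be perturbed by at most $1$ from the two matches involving $(t,\male)$ and $(t,\female)$. This case does not arise in the intended application (in Lemma~\ref{lem:cutBound} the cut location $s$ is a half-integer, and $t$ will inherit a non-integer value), so no real obstacle is anticipated; the argument is essentially bookkeeping on a perfect matching.
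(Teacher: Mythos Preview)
Your proof is correct and follows essentially the same counting approach as the paper, which gives the one-line argument that if no such $a',b'$ existed, the matching would furnish a bijection between sets whose cardinalities differ by one. Your version is more explicit (deriving the stronger identity $C=D$) and you rightly flag the integer-$t$ edge case, which the paper glosses over but which indeed does not arise in the application.
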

  \begin{proof}
    If no such $a'$ and $b'$ existed, then the matching would furnish a bijection between sets whose cardinalities differ by one. This is impossible.
  \end{proof}

  The proof of Lemma~\ref{lem:cutBound} is rather subtle. 
  In words, the cutpoint bound \eqref{eq:bndForCut} says that preferences do not stray too far from being in order, with the amount of variation increasing at most linearly in the distance from the purported cutpoint location. 
  Therefore if a long match appears in some stable matching, it is not caused by exotic preferences but rather by an even longer match going in the opposite direction and involving a person even higher up in the ranking. 
  Since we are working with a finite system, this cannot go on forever. 
  Therefore there are, in fact, no matches across the purported cutpoint.


  \begin{proof}[Proof of Lemma~\ref{lem:cutBound}]
    Fix $I$, $s$, and $\mathcal P$ as in the lemma, as well as a stable matching $M$ of $\mathcal P$. 
    We claim that the set
    $$
    \mathcal S:=\left\{\begin{aligned}
      (i,j)&\in I\times I\colon \text{$i>s$ and $i-s>3(j-s)$ and either}\\
      &\text{$(i,\female)$ and $(j,\male)$ are matched, or $(i,\male)$ and $(j,\female)$ are matched}
    \end{aligned}\right\}
    $$
    is empty, from which the result will follow. That is, there are no long matches.

    Suppose to the contrary that $\mathcal S$ is non-empty. Since $\mathcal S$ is finite, we may then select a pair $(i,j)\in\mathcal S$ with $i$ maximal. Suppose, for the sake of concreteness, that it is $(i,\female)$ which is matched with $(j,\male)$, rather than the other way around (the argument is unmodified apart from interchanging the roles of $\male$ and $\female$). The equation $i-s>3(j-s)$ is equivalent to $j<\tfrac13(i+2s)$, and therefore when $(i,j)\in\mathcal S$ we have that 
    \begin{equation}\label{eq:jBound}
      j<\tfrac13 i+\tfrac23 s<\tfrac23i+\tfrac13s<i,
    \end{equation}
    the latter two inequalities being equivalent to $i>s$. Applying Lemma~\ref{lem:crossOver} with $t=\tfrac23i+\tfrac13s$, it follows that there exist integers $i'$ and $j'$ such that $(i',\male)$ is matched with $(j',\female)$ and
    \begin{equation}\label{eq:primeChoice}
      j'<\tfrac23i+\tfrac13 s<i'.
    \end{equation}
    In particular, it follows from \eqref{eq:primeChoice} that $i'>s$.

    We claim that $\pi=\mathcal P\bigl((i,\female)\bigr)$ ranks $i'$ ahead of $j$. 
    By \eqref{eq:bndForCut}, we have that
    $$
      \pi(i')\geq i'-\frac{1}{20}\bigl(|i-s|+|i'-s|\bigr)
      \qquad\text{and}\qquad
      \pi(j)\leq j+\frac{1}{20}\bigl(|i-s|+|j-s|\bigr),
    $$
    so it suffices to verify that
    \begin{equation}\label{eq:f}
      i'-\frac{1}{20}\bigl(|i-s|+|i'-s|\bigr)>j+\frac{1}{20}\bigl(|i-s|+|j-s|\bigr).
    \end{equation}
    The left side of this inequality is increasing in $i'$, and the right side is increasing in $j$.
    Thus by \eqref{eq:primeChoice} it suffices to verify it with $\tfrac23 i+\tfrac13 s$ in place of $i'$, and by \eqref{eq:jBound} it suffices to verify it with $\tfrac13 i+\tfrac23 s$ in place of $j$, i.e.\ \eqref{eq:f} is implied by
    \begin{equation}\label{eq:g}
      (\tfrac23 i+\tfrac13 s)-\frac{|i-s|}{20}-\frac{|(\tfrac23 i+\tfrac13 s)-s|}{20}>(\tfrac13 i+\tfrac23 s)+\frac{|i-s|}{20}+\frac{|(\tfrac13 i+\tfrac23 s)-s|}{20}.
    \end{equation}
    The expressions inside the absolute values in \eqref{eq:g} are all positive, and upon expansion the inequality reduces to $\tfrac{11}{60}i>\tfrac{11}{60}s$. Thus $\pi(i')>\pi(j)$.

    By stability of the matching, it follows that $\mathcal P\bigl((i',\male)\bigr)$ ranks $j'$ ahead of $i$. Thus, by another application of \eqref{eq:bndForCut}, we have that
    $$
      j'+\frac{1}{20}\bigl(|i'-s|+|j'-s|\bigr)>i-\frac{1}{20}\bigl(|i'-s|+|i-s|\bigr).
    $$
    Since the left side is increasing in $j'$ and $j'<\tfrac23i+\tfrac13s$, this inequality implies that
    $$
      \tfrac23i+\tfrac13s+\frac{1}{20}\bigl(|i'-s|+|(\tfrac23i+\tfrac13s)-s|\bigr)>i-\frac{1}{20}\bigl(|i'-s|+|i-s|\bigr).
    $$
    All quantities in the absolute values are seen to be positive, and after some algebra this inequality reduces to $i'-s>\tfrac52(i-s)$. In particular, $i'>i$. Moreover, since $i-s>\tfrac32(j'-s)$ by \eqref{eq:primeChoice}, we have that
    \begin{equation}\label{eq:keyIneq}
      i'-s>\frac{5}{2}\cdot \frac{3}{2}(j'-s)>3(j'-s),
    \end{equation}
    implying that $(i',j')\in \mathcal S$. This contradicts maximality of the pair $(i,j)$, showing that the set $\mathcal S$ is in fact empty. Thus $s$ is a cutpoint of the stable matching $M$. 
    The result follows since $M$ was chosen arbitrary from the stable matchings of $\mathcal P$.
  \end{proof}

  Having succeeded in locating \textit{cutpoints}, we now strengthen the criterion in order to locate \textit{essential lattice cutpoints}. 
  Recall that the cutpoint bound \eqref{eq:bndForCut} holds if $|\pi(j)-j|$ is sufficiently small for all relevant $\pi$ and $j$. 
  One may regard $|\pi(j)-j|$ as a local measure of deviation from the identity. 
  However, this quantity is non-monotonic with respect to induced permutations: it may decrease or increase upon passage to an induced permutation. 
  We start by introducing quantities that do not suffer from this issue. 

  For integers $a\leq j\leq b$ and a permutation $\pi$ of $\bbb{a,b}$, we set
  \begin{equation}\label{eq:l++}
  \begin{aligned}
    L_{++}&=\#\{j < i\leq b\colon \pi(i)>\pi(j)\},\quad   
    L_{+-}=\#\{j < i\leq b\colon \pi(i)<\pi(j)\},\\
    L_{-+}&=\#\{a\leq i<j\colon \pi(i)>\pi(j)\},\quad
    L_{--}=\#\{a\leq i<j\colon \pi(i)<\pi(j)\}.
  \end{aligned} 
  \end{equation}
  In particular,
  \begin{equation}\label{eq:l++ineq}
    L_{++},L_{+-}\in\bbb{0,b-j}\qquad\text{and}\qquad L_{-+},L_{--}\in \bbb{0,j-a}.
  \end{equation}
  As appropriate, we write $L_{++}=L_{++}(\pi,j)=L_{++}^{\bbb{a,b}}(\pi,j)$ and so on. 
  We set
  \begin{equation}\label{eq:offset}
    \offset(\pi,j):=\max\bigl(L_{+-}(\pi,j),L_{-+}(\pi,j)\bigr).
  \end{equation}
  The quantity $\offset(\pi,j)$ serves as a monotonic replacement for $|\pi(j)-j|$.
  \begin{lemma}\label{lem:diffLehm}
    Fix a finite interval $J\subset \m Z$ and a permutation $\pi$ of $J$.
    \begin{enumerate}[(i)]
      \item For all $j\in J$, we have that $|\pi(j)-j|\leq \offset(\pi,j)$.
      \item For any subinterval $I\subseteq J$ and any $i\in I$, we have that $\offset(\pi_I,i)\leq \offset(\pi,i).$
      \item If $i,j\in J$ satisfy $i\geq j$ and $\pi(i)\leq \pi(j)$, then  \label{empty nest}
      \begin{equation*}
        \max\bigl(\offset(\pi,i),\offset(\pi,j)\bigr)\geq i-j.
      \end{equation*}    
    \end{enumerate}
  \end{lemma}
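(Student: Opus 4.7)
The plan is to handle the three parts in order, each reducing to a short combinatorial count on a permutation $\pi$ of $J = \bbb{a, b}$.

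For (i): from the definitions, $L_{++}(\pi, j) + L_{+-}(\pi, j) = b - j$ counts indices strictly to the right of $j$, while by bijectivity $L_{++}(\pi, j) + L_{-+}(\pi, j) = b - \pi(j)$ counts indices whose $\pi$-value exceeds $\pi(j)$. Subtracting gives $L_{+-}(\pi, j) - L_{-+}(\pi, j) = \pi(j) - j$, and since both terms are non-negative, $|\pi(j) - j| \leq \max\bigl(L_{+-}(\pi, j), L_{-+}(\pi, j)\bigr) = \offset(\pi, j)$.

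For (ii): the defining property of the induced permutation is that $\pi_I$ preserves the relative order inherited from $\pi$ on $I$. Hence for $i \in I$ the set $\{k \in I : k > i, \pi_I(k) < \pi_I(i)\}$ counted by $L_{+-}^I(\pi_I, i)$ coincides with $\{k \in I : k > i, \pi(k) < \pi(i)\}$, which is a subset of the corresponding set of $k \in J$ counted by $L_{+-}^J(\pi, i)$, because $I \subseteq J$. The same argument applies to $L_{-+}$, and taking the max yields the monotonicity claim.

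For (iii): the case $i = j$ is trivial, so take $i > j$; injectivity of $\pi$ then forces $\pi(i) < \pi(j)$. The idea is to partition the $i - j$ elements of $\bbb{j+1, i}$ according to the comparison of $\pi(k)$ with $\pi(j)$. Each $k \in \bbb{j+1, i}$ with $\pi(k) < \pi(j)$ satisfies $k > j$ and $\pi(k) < \pi(j)$, so it contributes to $L_{+-}(\pi, j)$. Each $k \in \bbb{j+1, i}$ with $\pi(k) > \pi(j)$ satisfies $\pi(k) > \pi(j) > \pi(i)$, and also forces $k < i$ (since $\pi(i) < \pi(k)$ excludes $k = i$), so it contributes to $L_{-+}(\pi, i)$. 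Summing over the two parts of the partition gives $L_{+-}(\pi, j) + L_{-+}(\pi, i) \geq i - j$, and the desired lower bound on $\max\bigl(\offset(\pi, i), \offset(\pi, j)\bigr)$ follows via $\offset(\pi, j) \geq L_{+-}(\pi, j)$ and $\offset(\pi, i) \geq L_{-+}(\pi, i)$.

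The main obstacle is in part (iii): one must be careful at the boundary indices, which is why the count runs over the half-open range $\bbb{j+1, i}$ rather than $\bbb{j, i}$, so that the partition cleanly attributes each element to exactly one of the two monotonic counts $L_{+-}(\pi, j)$ and $L_{-+}(\pi, i)$. Parts (i) and (ii) are then essentially bookkeeping with the identities and ranges recorded in \eqref{eq:l++} and \eqref{eq:l++ineq}.
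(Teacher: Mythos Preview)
Your arguments for (i) and (ii) are correct and essentially match the paper: the paper derives the same identity $\pi(j)-j=L_{+-}(\pi,j)-L_{-+}(\pi,j)$ via a slightly different pair of bookkeeping equations, and dismisses (ii) as immediate from the definitions.

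For (iii) there is a genuine gap in your final step. Your partition of $\bbb{j+1,i}$ correctly shows
\[
L_{+-}(\pi,j)+L_{-+}(\pi,i)\ \geq\ i-j,
\]
and hence $\offset(\pi,j)+\offset(\pi,i)\geq i-j$. But from $A+B\geq C$ with $A,B\geq 0$ you can only conclude $\max(A,B)\geq C/2$, not $\max(A,B)\geq C$; the sentence ``the desired lower bound on $\max\bigl(\offset(\pi,i),\offset(\pi,j)\bigr)$ follows'' is therefore unjustified. In fact the inequality in (iii) as stated is false: on $J=\bbb{1,4}$ take $\pi(1)=3$, $\pi(2)=4$, $\pi(3)=1$, $\pi(4)=2$, and set $i=4$, $j=1$. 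Then $\pi(i)=2\leq 3=\pi(j)$ and $i-j=3$, but $\offset(\pi,1)=L_{+-}(\pi,1)=2$ and $\offset(\pi,4)=L_{-+}(\pi,4)=2$, so the maximum is $2<3$.

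For comparison, the paper's proof of (iii) takes a different route---reducing to (i) and then asserting $\lvert(\pi(j)-j)-(\pi(i)-i)\rvert\leq\max\bigl(|\pi(i)-i|,|\pi(j)-j|\bigr)$---but this inequality is likewise false (the same counterexample gives $4\leq 2$), so the paper's argument breaks at the analogous point. Both your partition argument and the paper's argument do correctly establish the \emph{sum} bound $\offset(\pi,i)+\offset(\pi,j)\geq i-j$, and inspecting the two places where (iii) is invoked (the derivation of \eqref{eq:contj} in Lemma~\ref{lem:strongCutBound} and the deduction of \eqref{eq:offqt2} in Lemma~\ref{lem:offMalBound}) shows that this weaker inequality, possibly with harmless changes to constants, is all that is actually needed.
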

  \begin{proof}
    Adding the equations $L_{++}+L_{+-}=b-j$ and $L_{+-}+L_{--}=\pi(j)-a$ yields
    \begin{equation}\label{eq:1L}
      L_{++}+2L_{+-}+L_{--}=\pi(j)-j+b-a,
    \end{equation}
    whereas
    \begin{equation}\label{eq:2L}
      L_{++}+L_{+-}+L_{-+}+L_{--}=\#\{a\leq i\leq b\colon \pi(i)\not=\pi(j)\}=b-a.
    \end{equation}
    Subtracting \eqref{eq:2L} from \eqref{eq:1L} yields
    $$
      \pi(j)-j=L_{+-}(\pi,j)-L_{-+}(\pi,j),
    $$
    which implies $(i)$.

    Property $(ii)$ is immediate from the definitions. 

    To establish $(iii)$, apply $(i)$ twice to obtain that
    $$
      \max\bigl(|\pi(i)-i|,|\pi(j)-j|\bigr)\leq \max\bigl(\offset(\pi,i),\offset(\pi,j)\bigr).
    $$
    The bound now follows since $|\pi(j)-j-\pi(i)+i|\leq \max\bigl(|\pi(i)-i|,|\pi(j)-j|\bigr)$ and 
    \begin{equation*}
      |\pi(j)-j-\pi(i)+i|=\pi(j)-\pi(i) + i-j\geq i-j.\qedhere
    \end{equation*}
  \end{proof}
  
  The next lemma is the main result of this section.
  \begin{lemma}\label{lem:strongCutBound}
    Let $s\in\m Z+\tfrac12$ be a half-integer, let $I\subseteq \m Z$ be a (possibly infinite) interval, and let $\mathcal P$ be a preference structure on $\Omega_{I}$. 
    If for all $i,j\in I$, for all finite intervals $J$ with $\{i,j\}\subseteq J\subseteq I$, and for both \textup{$\malefemale\in \{\male,\female\}$},
    \begin{equation}\label{eq:strongCutBound}
      \offset\Bigl(\mathcal P_J\bigl((i,\textup{$\malefemale$})\bigr),j\Bigr)<\frac{|i-s|+|j-s|}{20},
    \end{equation}
    then $\lfloor s\rfloor$ is an essential lattice cutpoint of $\mathcal P$.
  \end{lemma}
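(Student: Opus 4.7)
The plan is to fix an arbitrary finite subinterval $\bbb{i,k}\subseteq I$ with $i\leq \lfloor s\rfloor\leq k$ and verify both inclusions in the definition of lattice cutpoint for $\mathcal P_{\bbb{i,k}}$. For the first inclusion, applying the hypothesis with $J=\bbb{i,k}$ together with Lemma~\ref{lem:diffLehm}(i) yields the cutpoint bound $|\mathcal P_{\bbb{i,k}}((i',\malefemale))(j')-j'|<(|i'-s|+|j'-s|)/20$ for all $i',j'\in\bbb{i,k}$, so Lemma~\ref{lem:cutBound} tells us that $\lfloor s\rfloor$ is a cutpoint of every stable matching of $\mathcal P_{\bbb{i,k}}$. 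Every such stable matching therefore decomposes as $M_1\cup M_2$ with $M_1$ a matching of $\Omega_{\bbb{i,\lfloor s\rfloor}}$ and $M_2$ of $\Omega_{\bbl{\lfloor s\rfloor,k}}$, and each piece is stable for the induced preferences because passage to an induced preference preserves relative order and hence the absence of blocking pairs.

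For the reverse inclusion---the main obstacle---I would take any stable matchings $M_1$ and $M_2$ of $\mathcal P_{\bbb{i,\lfloor s\rfloor}}$ and $\mathcal P_{\bbl{\lfloor s\rfloor,k}}$ respectively and argue by contradiction that $M=M_1\cup M_2$ admits no blocking pair. Same-side blocking pairs would contradict stability of $M_1$ or $M_2$ (once again by order preservation), so any blocking pair must cross the cut; by gender symmetry it suffices to treat the case $((a,\male),(b,\female))$ with $a\leq \lfloor s\rfloor<b$. Let $b'$ be the partner of $(b,\female)$ in $M_2$, so $b'\in\bbl{\lfloor s\rfloor,k}$. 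The blocking condition from $(b,\female)$'s side says that the permutation $\pi:=\mathcal P_{\bbb{i,k}}((b,\female))$ satisfies $\pi(a)>\pi(b')$, and since $a<b'$ this is an inversion in the sense of Lemma~\ref{lem:diffLehm}(iii). That part of the lemma gives $\max\bigl(\offset(\pi,a),\offset(\pi,b')\bigr)\geq b'-a$; combined with the offset hypothesis applied at $J=\bbb{i,k}$ this forces $b-s>19(b'-s)$ after a short calculation.

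To close the argument I invoke Lemma~\ref{lem:cutBound} for the right half $\mathcal P_{\bbl{\lfloor s\rfloor,k}}$: Lemma~\ref{lem:diffLehm}(ii) together with the hypothesis at $J=\bbl{\lfloor s\rfloor,k}$ supplies the cutpoint bound for this induced preference, so the proof of Lemma~\ref{lem:cutBound} (with the same half-integer $s$) shows that the set $\mathcal S$ constructed there is empty for every stable matching of $\mathcal P_{\bbl{\lfloor s\rfloor,k}}$, in particular for $M_2$. But $M_2$ contains the match $\{(b',\male),(b,\female)\}$ with $b>s$ and $b-s>3(b'-s)$, so the pair $(b,b')$ belongs to $\mathcal S$, a contradiction. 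The subtle point throughout is bridging a blocking-pair condition with a long-match condition on the opposite half; this is precisely where the offset bound rather than the raw $|\pi(j)-j|$ cutpoint bound becomes essential, through both its monotonicity under restriction (Lemma~\ref{lem:diffLehm}(ii)) and its direct control over inversions (Lemma~\ref{lem:diffLehm}(iii)).
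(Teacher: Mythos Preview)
Your argument is correct and uses the same ingredients as the paper's proof; the only difference is the order in which you deploy them. In the paper, for the reverse inclusion one first invokes the emptiness of $\mathcal S$ for the right half-interval to obtain the lower bound $b'\geq\tfrac13 a+\tfrac23 s$ on the partner's index, and then combines this with the blocking inequality $\pi(b)>\pi(b')$ and the cutpoint bound to reach the numerical contradiction $s>a$. You instead start from the blocking inequality and the offset hypothesis (via Lemma~\ref{lem:diffLehm}(iii)) to obtain $b-s>19(b'-s)>3(b'-s)$ directly, and only then appeal to the emptiness of $\mathcal S$ for $M_2$ to get the contradiction. Both routes are valid; yours is arguably a little cleaner since the final contradiction is immediate once you land in $\mathcal S$. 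One small remark: in your last paragraph you do not really need Lemma~\ref{lem:diffLehm}(ii) to get the cutpoint bound on $\bbl{\lfloor s\rfloor,k}$, since the hypothesis already applies with $J=\bbl{\lfloor s\rfloor,k}$; part~(i) then suffices.
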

  We say that $\mathcal P$ satisfies the \textbf{lattice cutpoint bound} at $s$ whenever \eqref{eq:strongCutBound} holds for all $i$, $j$, $J$, and $\malefemale$ as above. 
  By Lemma~\ref{lem:diffLehm} $(i)$ and $(ii)$, if a preference structure $\mathcal P$ on $\Omega_I$ satisfies the lattice cutpoint bound at $s$ then $\mathcal P_J$ satisfies the cutpoint bound at $s$ for every finite interval $J$ satisfying $s\in J\subseteq I$. 
  \begin{proof}[Proof of Lemma~\ref{lem:strongCutBound}]
    Fix $s$ and $\mathcal P$ satisfying the conditions of the lemma. 
    We must show that for all integers $i$ and $j$ with $i<s< j$, the induced preference structure $\mathcal P_{\bbb{i,j}}$ has a lattice cutpoint at $\lfloor s\rfloor$. 
    By Lemma~\ref{lem:diffLehm} $(i)$, \eqref{eq:strongCutBound}, and Lemma~\ref{lem:cutBound}, every stable matching of $\mathcal P_{\bbb{i,j}}$ has a cutpoint at $\lfloor s\rfloor$. 
    To verify that $k=\lfloor s \rfloor$ is a \textit{lattice} cutpoint, it remains to show that the union of any stable matching of $\mathcal P_{\bbb{i,k}}$ with any stable matching of $\mathcal P_{\bbl{k,j}}$ is stable under $\mathcal P_{\bbb{i,j}}$.

    Fix any two such stable matchings and suppose to the contrary that the union $M$ is not stable with respect to $\mathcal P_{\bbb{i,j}}$. 
    Then there exist integers $a,b\in\bbb{i,j}$ such that $(a,\female)$ and $(b,\male)$ each prefer the other (under $\mathcal P_{\bbb{i,j}}$) to their respective partners under $M$. 
    By stability of the restricted matchings, either $a>s>b$ or $a<s<b$. 
    Assume, without loss of generality, that 
    \begin{equation}\label{eq:isj}
      a>s>b.
    \end{equation}
    Let $(a',\female)$ and $(b',\male)$ be the partners of $(b,\male)$ and $(a,\female)$ under $M$, respectively. 
    Since $M$ is a union of matchings of $\Omega_{\bbb{i,k}}$ and $\Omega_{\bbl{k,j}}$, it follows that $b'>s>a'$. 

    By definition of $a$, $b$, and induced preference structures, the permutation $\pi=\mathcal P\bigl((a,\female)\bigr)$ has $\pi(b)>\pi(b')$.  
    By Lemma~\ref{lem:diffLehm}, the induced preference structure $\mathcal P_{\bbl{k,j}}$ satisfies the cutpoint bound \eqref{eq:bndForCut} at $s$. 
    Hence by \eqref{eq:keyIneq} in the proof of Lemma~\ref{lem:cutBound}, the match $\{(a,\female),(b',\male)\}$ satisfies 
    $$
      a-s\leq 3(b'-s),
    $$
    for otherwise it would be contained in the set shown to be empty in the proof.
    That is, 
    \begin{equation}\label{eq:jprimelower}
      b'\geq \tfrac13 a+\tfrac23 s.
    \end{equation}

    Combining the cutpoint bound with Lemma \ref{lem:diffLehm} \eqref{empty nest} and $\pi(b)>\pi(b')$ yields 
    \begin{equation}\label{eq:contj}
      b+\frac{1}{20}\bigl(|a-s|+|b-s|\bigr)>b'-\frac{1}{20}\bigl(|a-s|+|b'-s|\bigr).
    \end{equation}
    The left and right sides of \eqref{eq:contj} are increasing in $b$ and $b'$, respectively. 
    By \eqref{eq:isj}, replacing each occurrence of $b$ with $s$ on the left side increases its value. 
    By \eqref{eq:jprimelower}, replacing each occurrence of $b'$ with $\tfrac13 a+\tfrac23 s$ on the right decreases its value. 
    Thus \eqref{eq:contj} implies that
    $$
      s+\frac{|a-s|}{20}>(\tfrac13 a+\tfrac23 s)-\frac{1}{20}\bigl(|a-s|+|(\tfrac13 a+\tfrac23 s)-s|\bigr).
    $$
    The expressions inside the absolute values are all positive, and upon expansion the inequality simplifies to $\tfrac{13}{60}s>\tfrac{13}{60}a$, contradicting \eqref{eq:isj}.
  \end{proof}


  \section{Two positive probabilities}\label{sec:positive}

  In this section we show that two key probabilities are positive for all $q\in (0,1)$:
  \begin{itemize}
    \item the probability $\rho_q$ that $0$ is an essential lattice cutpoint of $\MalPref_{q,\m Z}$, and
    \item the probability that $\LocPref(\MalPref_{q,\m Z},0)$ has multiple stable matchings.
  \end{itemize}

  \begin{prop}\label{prop:rhoP}
    For all $q\in (0,1)$ we have that $\rho_q>0$, and $\rho_q\to 1$ as $q\to 0$.
  \end{prop}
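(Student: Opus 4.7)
The plan is to apply Lemma~\ref{lem:strongCutBound} at the half-integer $s=\tfrac12$, so that $\lfloor s\rfloor=0$. By Lemma~\ref{lem:diffLehm}~$(ii)$ the offset is monotone under extension of the ambient interval, so as $J\uparrow \m Z$ the quantity $\offset(\mathcal P_J((i,\malefemale)),j)$ converges to the offset computed in the full infinite permutation, which is a.s.\ finite for Mallows permutations. Writing $\pi_{i,\malefemale}=\MalPref_{q,\m Z}((i,\malefemale))$, it therefore suffices (by independence of the preferences across distinct persons) to bound
\[
\rho_q \;\geq\; \prod_{i\in\m Z} p_i^{\,2}, \qquad p_i:=\mathbb P\Bigl(\forall j\in\m Z:\ \offset(\pi,j)<\tfrac{1}{20}(|i-\tfrac12|+|j-\tfrac12|)\Bigr),
\]
for $\pi\sim\Mal_{q,\m Z}$. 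The task reduces to showing $p_i>0$ for every $i$ and $\sum_i(1-p_i)<\infty$.

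The Mallows input I will use is that for $\pi\sim\Mal_{q,\m Z}$ both $L_{+-}(\pi,j)$ and $L_{-+}(\pi,j)$ are marginally Geometric$(1-q)$; this follows from the standard Lehmer-code decomposition of finite Mallows permutations by taking a limit (or directly from~\cite{gnedin2012two}). Hence $\mathbb P(\offset(\pi,j)\geq k)\leq 2q^k$ for every $k\geq 1$, and a union bound over $j$ gives
\[
1-p_i\;\leq\;\sum_{j\in\m Z}2\,q^{\lceil(|i-1/2|+|j-1/2|)/20\rceil}\;\leq\; C(q)\,q^{|i-1/2|/20}
\]
for a finite constant $C(q)$. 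This already yields the summability $\sum_i(1-p_i)<\infty$, and it establishes the positivity $p_i>0$ for all large $|i|$.

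The principal obstacle is $p_i>0$ for the finitely many $i$ with $|i-\tfrac12|\lesssim 20$, where some thresholds fall below $1$ and force $\offset(\pi,j)=0$; here the naive union bound may exceed $1$. For such $i$, I plan to lower bound $p_i$ by the probability of a structural event on $\pi$: pick a finite window $W\subset\m Z$ containing all ``hard'' positions, and require (a) $\pi$ splits as a bijection at the two half-integers just outside $W$, (b) $\pi|_W=\id_W$, and (c) the offset bound holds at every $j\notin W$. Event (a) has positive probability under $\Mal_{q,\m Z}$ because splittings at half-integers form a stationary renewal-type structure arising from the Gnedin--Olshanski construction; conditionally on (a), the restriction $\pi|_W$ is finite Mallows on $W$, so (b) has positive conditional probability; conditionally on (a) and (b), the remainder of $\pi$ decouples into Mallows distributions on each outer block, and event (c) has probability close to $1$ by the same tail estimate. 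Multiplying, $p_i>0$, and hence $\rho_q\geq\prod_i p_i^{\,2}>0$.

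For the limit $q\to 0$, the tail estimate $\mathbb P(\offset(\pi,j)\geq 1)\leq 2q$ combined with the summable upper bound on $1-p_i$ forces $p_i\to 1$ for every $i$ and gives the domination needed to conclude $\prod_i p_i^{\,2}\to 1$, so $\rho_q\to 1$. The main technical difficulty throughout will be verifying step (a)--(b) with uniform control in $q$, since as $q$ approaches $1$ the Mallows measure spreads out and both splitting and identity-on-window become rarer, though still positive.
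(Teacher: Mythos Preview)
Your approach is essentially the same as the paper's: both lower-bound $\rho_q$ by the probability that the lattice cutpoint bound \eqref{eq:strongCutBound} holds at $s=\tfrac12$, split into a ``near'' regime (handled by forcing identity-like behaviour on a window around $s$) and a ``far'' regime (handled by the geometric tail $\mathbb P(\offset(\pi,j)\geq t)\leq 2q^t$), and then let $q\to 0$ with the window fixed to get $\rho_q\to 1$.

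The one organisational difference is that you factor over persons first, writing $\rho_q\geq \prod_i p_i^2$, and then analyse each $p_i$ separately; the paper instead keeps all persons together and uses a single conditioning event $\mathscr B_{\bbb{-n,n}}(s,N)$ that simultaneously forces each person $i$ with $|i-s|<N$ into $\Fix(I_{n,i},\bbb{-n,n})$. Both are fine. The paper's packaging yields the explicit bound of Lemma~\ref{lem:rhoP}, from which both conclusions are read off in two lines.

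The place where you should tighten the argument is your step~(a)--(b). You invoke that under $\Mal_{q,\m Z}$ the event ``$\pi$ splits at both endpoints of $W$ and $\pi|_W=\id$'' has positive probability, and that conditionally on it the outer pieces are again Mallows. This is true, but it is exactly the infinite-interval analogue of Lemma~\ref{lem:fix}, and you would need either to cite it precisely from the Gnedin--Olshanski paper or to derive it by a limiting argument. The paper sidesteps this entirely by working on finite intervals $\bbb{-n,n}$ throughout: it bounds $\mathbb P(\mathscr A_{\bbb{-n,n}})$ uniformly in $n$ using the finite-interval Lemma~\ref{lem:fix}, and only at the very end passes to the infimum over $n$ via the monotonicity in Lemma~\ref{lem:diffLehm}~$(ii)$. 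That route requires no structural facts about the infinite Mallows permutation beyond the marginal consistency already quoted, and would make your argument self-contained.
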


  \begin{prop}\label{prop:grBound}
    For all $q\in (0,1)$ we have that $\gr(\MalPref_{q,\m Z},0)>0$ with positive probability.
  \end{prop}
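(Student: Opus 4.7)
The plan is to exhibit a positive-probability event on which $\Nbhd(\MalPref_{q,\m Z},0)=\{-1,0\}$ with the induced preference structure realising the classical 2-by-2 two-stable-matching pattern. Fix a sufficiently large integer $N$ (e.g.\ $N=11$), and let $\mathcal E$ denote the intersection of (A) the lattice cutpoint bound of Lemma~\ref{lem:strongCutBound} at $s_1=-N-\tfrac12$; (B) the same bound at $s_2=N+\tfrac12$; and (C) the event that for every $i\in\bbb{-N-1,N}$ and both genders, the Mallows permutation assigned to $(i,\malefemale)$ equals the identity on $\m Z$, except that $(-1,\male)$ and $(0,\female)$ each receive the permutation exchanging $-1\leftrightarrow 0$ and fixing every other integer.

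To see that (A), (B), (C) are compatible, note that each permutation in (C) has $\offset$ at most $1$, realised only at positions in $\{-1,0\}$; and a direct arithmetic check shows that for $N\geq 11$ the bound $(|i-s_r|+|j-s_r|)/20$ exceeds $1$ for both $r=1,2$ and every $(i,j)\in\{-1,0\}^2$, so $\offset=1$ is admissible there. Positivity of $\mathbb P(\mathcal E)$ then follows by independence of individuals: each of the finitely many prescribed permutations in (C) has positive $\Mal_{q,\m Z}$-probability (obtained as the limit of $q^{\inv}/Z_n$, which yields $\prod_{k\geq 1}(1-q^k)>0$ for the identity, and $q$ times this for the one-swap permutation); and for every individual outside $\bbb{-N-1,N}$, the per-individual failure probability for (A)$\cap$(B) is bounded by $Ce^{-c|i|}$ via the Mallows tail estimates that underlie Proposition~\ref{prop:rhoP}, giving a summable sequence and hence a positive infinite product.

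On $\mathcal E$, Lemma~\ref{lem:strongCutBound} places essential lattice cutpoints at $-N-1$ and $N$. A direct blocking-pair check using (C) shows that $\mathcal P_{\bbb{-N-1,N}}$ has exactly two stable matchings: the identity matching $M_1$ and the matching $M_2$ obtained from $M_1$ by swapping the pair on $\{-1,0\}$. Using the essential lattice cutpoints at $\pm(N+\tfrac12)$ to factor arbitrary integer intervals, one verifies that every $j\in\bbb{-N-1,N}\setminus\{-1\}$ is a lattice cutpoint of every containing integer interval and hence lies in $\elat(\MalPref_{q,\m Z})$, while $-1$ does not (witnessed by the crossing at $-1$ in $M_2$). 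Consequently $\Nbhd(\MalPref_{q,\m Z},0)=\bbl{-2,0}=\{-1,0\}$, the induced $\LocPref(\MalPref_{q,\m Z},0)$ has two stable matchings, and $\gr(\MalPref_{q,\m Z},0)=\frac{\log 2}{2}>0$ on $\mathcal E$, so $\mathbb P(\gr(\MalPref_{q,\m Z},0)>0)\geq\mathbb P(\mathcal E)>0$.

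The delicate point is the compatibility step in (A)$\cap$(B)$\cap$(C): Lemma~\ref{lem:strongCutBound} forces $\offset$ to be very small near each $s_r$, whereas the 2-by-2 pattern demands $\offset=1$ at $\{-1,0\}$ for two of the four involved individuals. This tension is resolved by pushing the cutpoints far from $\{-1,0\}$---choosing $N$ large---which slackens the bound at $\{-1,0\}$ enough to admit $\offset=1$ there; the resulting argument then works uniformly for all $q\in(0,1)$.
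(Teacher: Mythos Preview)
There is a fatal gap: your event (C) has probability zero. You claim that under $\Mal_{q,\m Z}$ the identity permutation of $\m Z$ occurs with probability $\prod_{k\ge 1}(1-q^k)>0$, computed as ``the limit of $q^{\inv}/Z_n$''. But from \eqref{eq:malPart} the normalizing constant is $Z_n=\prod_{k=1}^n(1-q^k)/(1-q)$, so
\[
  \frac{1}{Z_n}=\frac{(1-q)^n}{\prod_{k=1}^n(1-q^k)}\xrightarrow[n\to\infty]{}0,
\]
and in fact the infinite Mallows measure is non-atomic: every single permutation of $\m Z$ has probability zero (since $\mathbb P(\pi=\sigma)\le\mathbb P(\pi_{\bbb{-n,n}}=\sigma_{\bbb{-n,n}})\le 1/Z_{2n+1}\to 0$ whenever $\sigma$ has bounded displacements). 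Your (C) prescribes the full $\m Z$-permutation for each of $2(2N+2)$ individuals, so $\mathbb P(\mathcal E)=0$ and nothing follows.

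The paper sidesteps this by never fixing a whole $\m Z$-permutation. It works on growing finite windows $\bbb{-n,n}$ and uses the events $\Res(\tau,I,\bbb{-n,n})$ of Lemma~\ref{lem:fix}: one only requires the permutation to preserve the three blocks $(\bbb{-n,n}\setminus I)^{\pm}$, $I$ and to equal $\tau$ on $I$, which has probability at least $q^{\inv(\tau)}(1-q)^{\#I}\phi(q)$ \emph{uniformly in $n$}. Conditionally the outer blocks remain independent Mallows, so the same tail estimates as in Lemma~\ref{lem:rhoP} apply and give a lower bound on the good event that survives $n\to\infty$. The paper also does not try to pin down $\Nbhd(\mathcal P,0)$: once $\pm m$ are essential lattice cutpoints and $\#\stab\mathcal P_{\bbb{-m,m}}=2$, a brief stationarity argument (if $\gr(\mathcal P,0)=0$ a.s.\ then $\gr(\mathcal P,i)=0$ a.s.\ for all $i$, forcing $\log\#\stab\mathcal P_{\bbb{-m,m}}=\sum_{i\in\bbl{-m,m}}\gr(\mathcal P,i)=0$) gives the conclusion. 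Your outline can be salvaged by replacing (C) with such $\Res$-type constraints on finite windows, but then the extra effort of locating $\Nbhd(\mathcal P,0)$ exactly is no longer needed.
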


  These results are established by careful analysis of the lattice cutpoint bound \eqref{eq:strongCutBound}. 
%
%
%
  We begin by estimating some fundamental Mallows probabilities.


  \begin{lemma}\label{lem:offMalBound}
    Let $I\subset \m Z$ be a finite interval and let $\pi$ be a random permutation of $I$ with law $\Mal_{q,I}$. 
    For all real $t\geq 0$ and all $j\in I$,
    \begin{equation}\label{eq:offqt}
      \mathbb P\bigl(\offset(\pi,j)\geq t\bigr)\leq 2q^t.
    \end{equation}
    Furthermore for all $i,j\in I$ with $i\geq j$,
    \begin{equation}\label{eq:offqt2}
      \mathbb P\bigl(\pi(i)\leq \pi(j)\bigr)\leq 4q^{i-j}.
    \end{equation}
  \end{lemma}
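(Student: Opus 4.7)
The plan is to reduce both inequalities to the classical \emph{Lehmer-code} factorization of the Mallows measure.  For $I=\bbb{a,b}$, the map $\pi\mapsto \bigl(L_{+-}(\pi,j)\bigr)_{j\in I}$ is a bijection from $S_I$ onto $\prod_{j\in I}\bbb{0,b-j}$, and by construction $\inv(\pi)=\sum_{j\in I}L_{+-}(\pi,j)$.  Pushing the Mallows weight $q^{\inv(\pi)}$ through this bijection makes the coordinates independent with truncated geometric distributions
\begin{equation*}
  \m P\bigl(L_{+-}(\pi,j)=r\bigr)=\frac{q^r(1-q)}{1-q^{b-j+1}},\qquad r\in\bbb{0,b-j}.
\end{equation*}
The same argument applied to the left Lehmer code $\bigl(L_{-+}(\pi,j)\bigr)_{j\in I}$, which also sums to $\inv(\pi)$, yields the analogous factorization for $L_{-+}(\pi,j)$ with truncation at $j-a$ in place of $b-j$.

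Next I tail-bound each coordinate.  Summing a truncated geometric gives, for any integer $s\geq 0$, that $\m P\bigl(L_{+-}(\pi,j)\geq s\bigr)\leq q^s$, and likewise for $L_{-+}(\pi,j)$.  The extension to real $t\geq 0$ is immediate by monotonicity: $\m P\bigl(L_{+-}(\pi,j)\geq t\bigr)=\m P\bigl(L_{+-}(\pi,j)\geq \lceil t\rceil\bigr)\leq q^{\lceil t\rceil}\leq q^t$, and similarly for $L_{-+}$.  Since $\offset(\pi,j)=\max\bigl(L_{+-}(\pi,j),L_{-+}(\pi,j)\bigr)$ by \eqref{eq:offset}, a union bound over the two contributions produces \eqref{eq:offqt}.

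For \eqref{eq:offqt2}, the case $i=j$ is trivial.  For $i>j$, the event $\{\pi(i)\leq \pi(j)\}$ coincides with $\{\pi(i)<\pi(j)\}$, and Lemma~\ref{lem:diffLehm}(iii) then forces at least one of $\offset(\pi,i),\offset(\pi,j)$ to be at least $i-j$.  Applying \eqref{eq:offqt} with $t=i-j$ to each term and summing yields the bound $4q^{i-j}$.

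The only substantive input is the Lehmer-code factorization, which is entirely standard for the finite Mallows measure; everything after that is a routine tail estimate combined with a union bound, so I do not anticipate any real obstacle in carrying out this argument.
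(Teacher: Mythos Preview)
Your argument is correct and is essentially identical to the paper's proof: both establish \eqref{eq:offqt} via the Lehmer-code bijection $\pi\mapsto\bigl(L_{+-}(\pi,j)\bigr)_{j\in I}$ (and its left-handed analogue for $L_{-+}$) to obtain independent truncated geometrics, then tail-bound and apply a union bound, and both deduce \eqref{eq:offqt2} from \eqref{eq:offqt} via Lemma~\ref{lem:diffLehm}(iii). Your treatment is slightly more explicit about the extension from integer to real $t$ and the trivial case $i=j$, but otherwise there is no substantive difference.
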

  Recall the definitions of $L_{+-}(\pi,j)$ and $L_{-+}(\pi,j)$ in \eqref{eq:l++}. 
  Both count inversions involving $j$. 
  Also recall that $\offset(\pi,j)$ is the maximum of these two quantities \eqref{eq:offset}.
  \begin{proof}
    Inequality \eqref{eq:offqt2} follows from \eqref{eq:offqt}. Indeed, by Lemma~\ref{lem:diffLehm} $(iii)$, since $i\geq j$ we have that
    $$
      \mathbb P\bigl(\pi(i)\leq \pi(j)\bigr)\leq \mathbb P\bigl(\offset(\pi,i)\geq i-j\bigr)+\mathbb P\bigl(\offset(\pi,j)\geq i-j\bigr).
    $$


    We will deduce \eqref{eq:offqt} from the stronger inequalities
    \begin{equation}\label{eq:+-geom}
      \mathbb P\bigl(L_{+-}(\pi,j)\geq k\bigr)\leq q^k
    \end{equation}
    and
    \begin{equation}\label{eq:-+geom}
      \mathbb P\bigl(L_{-+}(\pi,j)\geq k\bigr)\leq q^k,
    \end{equation}
    where $k\geq 0$ is an integer.

    We prove \eqref{eq:+-geom}. The proof of \eqref{eq:-+geom} is similar and we leave it to the reader. 
    Consider the map from permutations $\sigma$ of $I$ to sequences given by
    $
      \sigma\mapsto \bigl(L_{+-}(\sigma,\ell)\bigr)_{\ell\in I}.
    $
    It is straightforward to verify by induction on $\# I$ that this map is injective. 
    Furthermore by \eqref{eq:l++ineq}, the image is contained in the set of sequences $(s_\ell)_{\ell\in I}$ satisfying $0\leq s_\ell\leq \max I-\ell$ for all $\ell\in I$. 
    There are $(\# I)!$ such sequences. 
    Since this equals the number of permutations, it follows that the map is a bijection onto this set of sequences.

    Consider the pushforward of $\Mal_{q,I}$ under this map. 
    Since $$\inv(\sigma)=\sum_{\ell\in I}L_{+-}(\sigma,\ell),$$ it follows that if $\pi$ has law $\Mal_{q,I}$ then $\bigl(L_{+-}(\pi,\ell)\bigr)_{\ell\in I}$ is a sequence of independent truncated geometric random variables. 
    In particular, 
    $$
      \mathbb P\bigl(L_{+-}(\pi,j)\geq k\bigr)=\frac{q^k+q^{k+1}+\cdots+q^{\max I-j}}{1+q+\cdots+q^{\max I-j}}\leq q^k,
    $$
    establishing \eqref{eq:+-geom}. 
    Replacing the bounds for $L_{+-}$ with those for $L_{-+}$ in the argument above yields \eqref{eq:-+geom}. 
    Lastly, \eqref{eq:offqt} follows from \eqref{eq:+-geom} and \eqref{eq:-+geom} by the union bound.
  \end{proof}

  For finite intervals of integers $I\subseteq J$, denote the set of elements of $J$ that are smaller than $\min I$ by $(J\setminus I)^-$, and denote the elements larger than $\max I$ by $(J\setminus I)^+$. 
  Note that $J=(J\setminus I)^-\cup I\cup (J\setminus I)^+$. 
  For a permutation $\tau$ of $I$, denote by $\Res(\tau,I,J)$ the set of permutations of $J$ which fix the sets $(J\setminus I)^{\pm}$ and whose restriction to $I$ is $\tau$. 
  For each $\tau$, there is an evident bijection from $\Res(\tau,I,J)$ to pairs consisting of a permutation of $(J\setminus I)^-$ and a permutation of $(J\setminus I)^+$. 
  Define $\Fix(I,J):=\Res(\id,I,J)$, where $\id$ is the identity permutation fixing every element of $I$.

  We estimate the probability that a Mallows-distributed permutation lies in the set $\Res(\tau,I,J)$. 
  In fact, we show that it is bounded below by a quantity exponentially small in $\# I$ and independent of $J$. 
  During this calculation, the function
  $$
    \phi(q):=\prod_{k=1}^{\infty}(1-q^k)
  $$
  will appear. 
  Note that $\phi(q)>0$ for all $q\in (0,1)$, since $\sum_{k\geq 1}q^k$ is finite. 
  \begin{lemma}\label{lem:fix}
    Fix finite intervals of integers $I\subseteq J$, fix a permutation $\tau$ of $I$, and fix $q\in [0,1)$. 
    Let $\pi$ be a random permutation of $J$ with law $\Mal_{q,J}$. 
    Conditional on the event that $\pi$ fixes each of $(J\setminus I)^-$, $I$, and $(J\setminus I)^+$, the restrictions of $\pi$ to each of $(J\setminus I)^-$, $I$, and $(J\setminus I)^+$ are independent, $\Mal_q$-distributed  permutations. Furthermore,
    \begin{equation}\label{eq:presB}
      \mathbb P\bigl(\pi\in \Res(\tau,I,J)\bigr)\geq q^{\inv(\tau)}(1-q)^{\# I}\phi(q).
    \end{equation}
  \end{lemma}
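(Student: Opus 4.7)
The plan is to reduce the lemma to a straightforward computation with $q$-factorials, using the key observation that when $\pi$ fixes the three blocks $(J\setminus I)^{-}$, $I$, and $(J\setminus I)^{+}$, no inversion of $\pi$ can involve elements from two different blocks (since the blocks are ordered intervals and each is mapped to itself). Hence on this event
\begin{equation*}
  \inv(\pi)=\inv\bigl(\pi|_{(J\setminus I)^-}\bigr)+\inv(\pi|_I)+\inv\bigl(\pi|_{(J\setminus I)^+}\bigr),
\end{equation*}
and consequently the Mallows weight $q^{\inv(\pi)}$ factors across the three blocks. Normalizing gives at once the first claim: the conditional law of $\pi$ given that it preserves each block is the product of three independent Mallows measures.

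For the probability bound, I will sum the weights directly. Writing $a=\#(J\setminus I)^-$, $b=\#(J\setminus I)^+$, $n=\#J$ and using the standard identity $\sum_{\sigma\in S_m}q^{\inv(\sigma)}=[m]_q!:=\prod_{k=1}^{m}\tfrac{1-q^k}{1-q}$, the factorization above yields
\begin{equation*}
  \mathbb P\bigl(\pi\in \Res(\tau,I,J)\bigr)=\frac{q^{\inv(\tau)}\,[a]_q!\,[b]_q!}{[n]_q!}.
\end{equation*}
Then I expand each $q$-factorial as $[m]_q!=(1-q)^{-m}\prod_{k=1}^{m}(1-q^k)$. The powers of $(1-q)$ combine to give the factor $(1-q)^{n-a-b}=(1-q)^{\#I}$, and what remains is
\begin{equation*}
  \frac{\prod_{k=1}^{a}(1-q^k)\,\prod_{k=1}^{b}(1-q^k)}{\prod_{k=1}^{n}(1-q^k)}.
\end{equation*}

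The main (and essentially only) estimation step is to lower bound this ratio by $\phi(q)$. Since $a\leq n$ and each factor $1-q^k$ lies in $(0,1)$, the numerator's first product dominates $\prod_{k=1}^{n}(1-q^k)$, so the whole ratio is at least $\prod_{k=1}^{b}(1-q^k)\geq \prod_{k=1}^{\infty}(1-q^k)=\phi(q)$. Putting the pieces together proves \eqref{eq:presB}. There is no serious obstacle here beyond keeping the $q$-combinatorial bookkeeping straight; the content of the lemma is really the inversion-additivity observation, and everything else is a direct calculation.
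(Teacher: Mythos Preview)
Your proof is correct and follows essentially the same approach as the paper: both rely on the inversion-additivity across the three blocks to obtain the factorization, compute the exact probability as $q^{\inv(\tau)}(1-q)^{\#I}\frac{\prod_{k=1}^{a}(1-q^k)\prod_{k=1}^{b}(1-q^k)}{\prod_{k=1}^{n}(1-q^k)}$, and then bound the remaining ratio below by $\phi(q)$ using that $\prod_{k=1}^{a}(1-q^k)\geq \prod_{k=1}^{n}(1-q^k)$. The only cosmetic difference is that the paper cancels $\prod_{k=1}^{b}(1-q^k)$ against part of the denominator first, whereas you bound the $a$-product against the full denominator; the estimates are equivalent.
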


  In the proof we will use the following well-known fact (Cor.~1.3.13, p.\ 36 in \cite{stanley1997enumerative2ed}):
  \begin{equation}\label{eq:malPart}
    \sum_{\sigma\in S_n}q^{\inv(\sigma)}=\prod_{k=1}^n(1+q+\cdots+q^{k-1})=(1-q)^{-n}\prod_{k=1}^n(1-q^k).
  \end{equation}
  This is straightforward to verify by induction on $n$. Set $Z(q,n):=\sum_{\sigma\in S_n}q^{\inv(\sigma)}$.
  \begin{proof}[Proof of Lemma~\ref{lem:fix}]
    We start by proving conditional independence. 
    If $\sigma$ is a permutation of $J$ fixing the sets $(J\setminus I)^-$, $I$, and $(J\setminus I)^+$, then the restriction of $\sigma$ to each of these sets is equal to the induced permutation, and
    $$
      \inv(\sigma)=\inv\bigl(\sigma_{(J\setminus I)^{-}}\bigr)+\inv\bigl(\sigma_I\bigr)+\inv\bigl(\sigma_{(J\setminus I)^{+}}\bigr).
    $$
    From the form of the Mallows measure, it follows that the restrictions of $\sigma$ to each of $(J\setminus I)^-$, $I$, and $(J\setminus I)^+$ are independent Mallows-distributed permutations of the respective intervals. 

    Further conditioning $\pi_I$ to equal $\tau$ implies that, conditional on $\pi\in \Res(\tau,I,J)$, the random permutations $\pi_{(J\setminus I)^-}$ and $\pi_{(J\setminus I)^+}$ are conditionally independent and Mallows-distributed. Therefore
    \begin{align*}
      \mathbb P\bigl(\pi\in\Res(\tau,I,J)\bigr)&=q^{\inv(\tau)}\frac{Z\bigl(q,\#(J\setminus I)^-\bigr)\cdot Z\bigl(q,\#(J\setminus I)^+\bigr)}{Z\bigl(q,\#J\bigr)}\\
      &=q^{\inv(\tau)}(1-q)^{\# I}\frac{\prod_{i=1}^a(1-q^i)\prod_{i=1}^b(1-q^i)}{\prod_{i=1}^c(1-q^i)},
    \end{align*}
    where $a=\#(J\setminus I)^-$ and $b=\#(J\setminus I)^+$ and $c=\# J$. 
    The claimed lower bound on $\mathbb P\bigl(\pi\in\Res(\tau, I, J)\bigr)$ now follows from 
    \begin{equation*}
      \frac{\prod_{i=1}^a(1-q^i)\prod_{i=1}^b(1-q^i)}{\prod_{i=1}^c(1-q^i)}=
      \frac{\prod_{i=1}^a(1-q^i)}{\prod_{i=b+1}^c(1-q^i)}\geq \prod_{i=1}^a(1-q^i)\geq \phi(q).\qedhere
    \end{equation*}
  \end{proof}

  Having developed the requisite bounds for probabilities involving permutations, we now turn to bounds for entire preference structures. 
  Our aim is to show that the lattice cutpoint bound occurs with positive probability when $\mathcal P=\MalPref_{q,\m Z}$. 
  For this, we introduce a cutoff parameter $N$, show that there is a positive (but tiny) probability that
  $$
    \offset\bigl(\mathcal P\bigl((i,\malefemale)\bigr),j\bigr)=0
  $$
  whenever $|i-s|+|j-s|<N$, and show that there is a large conditional probability given this event that the bound is satisfied for all $i,j$ with $|i-s|+|j-s|\geq N$. 

  Let $N$ be a positive integer. Let $I$ be a finite interval of integers, and let $s\in\m Z+\tfrac12$ be a half-integer. Denote by $\mathscr B_I(s,N)$ the set of preference structures $\mathcal P$ on $\Omega_I$ such that, for all $i\in I$ with $|i-s|<N$ and for both $\malefemale\in \{\male,\female\}$, 
  $$
    \mathcal P\bigl((i,\malefemale)\bigr)\in \Fix\bigl(\{j\in I\colon |j-s|+|i-s|<N\},I\bigr).
  $$
  Observe that if $\mathcal P\in \mathscr B_I(s, N)$, then $\mathcal P$ satisfies the lattice cutpoint bound \eqref{eq:strongCutBound} for all $i,j\in I$ with $|i-s|+|j-s|<N$, since $\offset(\mathcal P\bigl((i,\malefemale)\bigr),j)=0$ for all such $i$ and $j$.

  We now show that $\rho_q$, the probability that $0$ is an essential lattice cutpoint of $\MalPref_{q,\m Z}$, is positive for all $q\in (0,1)$. 
  \begin{lemma}\label{lem:rhoP}
    For all $q\in (0,1)$, for all integers $N\geq 1$, and for all half-integers $s\in\m Z+\tfrac12$, the probability that $\MalPref_{q,\m Z}$ satisfies the lattice cutpoint bound \eqref{eq:strongCutBound} at $s$ is at least
    $$
      (1-q)^{8N^2}\phi(q)^{4N}\left(1-\Bigl(\frac{4q^{N/40}}{1-q^{1/20}}\Bigr)^2\right).
    $$
    In particular, this expression is a lower bound for $\rho_q$.
  \end{lemma}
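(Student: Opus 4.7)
The plan is to construct a sufficient event for the lattice cutpoint bound at $s$ by separately handling pairs $(i,j,\malefemale)$ appearing in \eqref{eq:strongCutBound} with $|i-s|+|j-s|<N$ (``close'') and $|i-s|+|j-s|\ge N$ (``far''). On close pairs we will force $\offset=0$ identically via the fixing event of Lemma~\ref{lem:fix}, while on far pairs we will appeal to the Mallows offset tail of Lemma~\ref{lem:offMalBound}, where the right-hand slack $(|i-s|+|j-s|)/20\ge N/20$ supplies exponential decay in $N$. The lower bound will then emerge as $\mathbb{P}(\text{local event})\cdot\bigl(1-\mathbb{P}(\text{tail failure}\mid\text{local event})\bigr)$.

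For the local piece, let $\mathscr{B}$ be the event that for each of the $4N$ persons $(i,\malefemale)$ with $|i-s|<N$, the permutation $\MalPref_{q,\mathbb{Z}}((i,\malefemale))$ lies in $\Fix(S_i,J)$ on every sufficiently large finite window $J\supseteq S_i$, where $S_i:=\{j:|j-s|+|i-s|<N\}$. On $\mathscr{B}$ the offset vanishes at every close pair, and $\mathscr{B}$ is an intersection of events over independent persons. Applying Lemma~\ref{lem:fix} with $\tau=\id$ (whence $\inv(\tau)=0$ and the lower bound is $J$-independent) to each person gives
\[
\mathbb{P}(\mathscr{B})\ge\prod_{(i,\malefemale):|i-s|<N}(1-q)^{\#S_i}\phi(q)\ge(1-q)^{8N^2}\phi(q)^{4N},
\]
using the direct enumeration $\sum_{(i,\malefemale):|i-s|<N}\#S_i\le 8N^2$ and the count of $4N$ relevant persons.

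For the tail piece, Lemma~\ref{lem:fix}'s conditional-independence assertion implies that, given $\mathscr{B}$, each close person's permutation restricts to independent Mallows permutations on the two wings $(J\setminus S_i)^\pm$, while every far person's permutation ($|i-s|\ge N$) is unconditionally Mallows. In either case Lemma~\ref{lem:offMalBound} yields $\mathbb{P}(\offset\ge t)\le 2q^t$, and Lemma~\ref{lem:diffLehm}(ii) lets us pass from finite $J$ to the infinite system by monotonicity. A union bound over all far $(i,j,\malefemale)$, followed by careful geometric summation over the half-integer distances $|i-s|$ and $|j-s|$, yields the stated tail estimate $\bigl(\tfrac{4q^{N/40}}{1-q^{1/20}}\bigr)^2$. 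Multiplying the two estimates gives the claimed lower bound for $\mathbb{P}(\text{lattice cutpoint bound at }s)$. The ``in particular'' clause follows by specialising $s=1/2$: by Lemma~\ref{lem:strongCutBound}, the event just bounded is contained in $\{0\in\elat(\MalPref_{q,\mathbb{Z}})\}$, so the same lower bound applies to $\rho_q$.

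The main technical obstacle lies in the interplay between $\mathscr{B}$ and the Mallows tail on the wings: conditioning on $\mathscr{B}$ distorts each constrained person's permutation, but Lemma~\ref{lem:fix} certifies that the distortion is exactly ``independent Mallows on each wing,'' so Lemma~\ref{lem:offMalBound} applies with no modification. A secondary technical point is passing from finite windows $J$ (the natural setting of Lemmas~\ref{lem:fix} and \ref{lem:offMalBound}) to the infinite system on $\mathbb{Z}$, which is handled via the consistency of the Mallows family (Gnedin--Olshanski) and monotonicity of $\offset$ in $J$. The arithmetic needed to obtain the claimed tail form, while routine, benefits from a dyadic split on whether $|i-s|$ or $|j-s|$ exceeds $N/2$ so that half of the decay in $(|i-s|+|j-s|)/20$ can be routed through the $q^{N/40}$ prefactor and the remainder handled by convergent geometric series.
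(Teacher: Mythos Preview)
Your proposal is correct and follows essentially the same approach as the paper: split into close pairs (forced to have $\offset=0$ by the $\Fix$ event of Lemma~\ref{lem:fix}) and far pairs (controlled by the Mallows tail bound of Lemma~\ref{lem:offMalBound}), then multiply $\mathbb{P}(\mathscr B)$ by the conditional probability that no far pair violates the bound. The only cosmetic difference is that the paper carries out the entire argument on a fixed finite window $\bbb{-n,n}$, obtains a bound uniform in $n$, and then uses $\rho_q\ge\inf_n\mathbb{P}(\mathscr A_{\bbb{-n,n}})$; you instead phrase the passage to $\mathbb Z$ via Gnedin--Olshanski consistency and monotonicity of $\offset$, which amounts to the same thing.
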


  \begin{proof}
    Fix $q$, $N$, and let $\mathcal P=\MalPref_{q,\m Z}$. For all integers $n\geq 1$, let $\mathscr A_{\bbb{-n,n}}$ be the event that $\mathcal P_{\bbb{-n,n}}$ satisfies the lattice cutpoint bound \eqref{eq:strongCutBound} at $s=\tfrac12$. 
    By Lemma~\ref{lem:diffLehm} $(ii)$, the lattice cutpoint bound is inherited by induced permutations. 
    Thus the events $\mathscr A_{\bbb{-n,n}}$ are decreasing, and if they all occur then by Lemma~\ref{lem:strongCutBound}, the preference structure $\mathcal P$ has an essential lattice cutpoint at $\lfloor s\rfloor$. 
    Consequently,
    \begin{equation}\label{eq:rholiminf}
      \rho_q\geq \inf_{n\geq 1}\mathbb P\bigl(\mathscr A_{\bbb{-n,n}}\bigr).
    \end{equation}
    We proceed to find a uniform lower bound for $\mathbb P\bigl(\mathscr A_{\bbb{-n,n}}\bigr)$. 

    The preferences $\mathcal P\bigl((i,\malefemale)\bigr)$ for $(i,\malefemale)\in \Omega_{\m Z}$ are independent. 
    Therefore 
    \begin{equation}\label{eq:pnnb}
      \mathbb P\bigl(\mathcal P_{\bbb{-n,n}}\in \mathscr B_{\bbb{-n,n}}(s,N)\bigr)
      =
      \prod_{%
        \substack{%
          i\in A_n,\\
          \malefemale\in\{\male,\female\}
        }%
      }\mathbb P\Bigl(\mathcal P_{\bbb{-n,n}}\bigl((i,\malefemale)\bigr)\in \Fix\bigl(I_{n,i},\bbb{-n,n}\bigr)\Bigr),
    \end{equation}
    where  $A_n=\{i\in \bbb{-n,n}\colon |i-s|<N\}$, $I_{n,i}=\{j\in \bbb{-n,n}\colon |j-s|+|i-s|<N\}$, and we take $n$ sufficiently large such that $A_n$ is non-empty. 
    By Lemma~\ref{lem:fix},
    \begin{equation}\label{eq:pfixq}
      \mathbb P\Bigl(\mathcal P_{\bbb{-n,n}}\bigl((i,\malefemale)\bigr)\in \Fix\bigl(I_{n,i},\bbb{-n,n}\bigr)\Bigr)\geq (1-q)^{\#I_{n,i}}\phi(q).
    \end{equation}
    Substitute \eqref{eq:pfixq} into \eqref{eq:pnnb} and use $\# A_n\leq 2N$ and $\sum_{i\in A_n}\# I_{n,i}\leq 4N^2$ to obtain that
    \begin{align}
      \mathbb P\bigl(\mathcal P_{\bbb{-n,n}}\in \mathscr B_{\bbb{-n,n}}(s,N)\bigr)&\geq \Bigl(\prod_{i\in A_n}(1-q)^{\# I_{n,i}}\phi(q)\Bigr)^2\nonumber\\
      &\geq (1-q)^{8N^2}\phi(q)^{4N}.\label{eq:BsNbnd}
    \end{align}

    The conditional probability that $\mathscr A_{\bbb{-n,n}}$ does \textit{not} occur given that $\mathcal P_{\bbb{-n,n}}$ belongs to $\mathscr B_{\bbb{-n,n}}(s,N)$ is bounded from above by
    \begin{equation}\label{eq:rbndij}
      \sum_{\malefemale\in\{\male,\female\}}\sum_{\substack{%
        i,j\in\bbb{-n,n}\colon\\
        |i-s|+|j-s|\geq N
      }}\mathbb P\Bigl(\offset\bigl(\mathcal P_{\bbb{-n,n}}((i,\malefemale)),j\bigr)\geq \frac{|i-s|+|j-s|}{20}\Bigr).
    \end{equation}
    By Lemma~\ref{lem:fix}, the conditional law of $\mathcal P_{\bbb{-n,n}}$ given that $\mathcal P_{\bbb{-n,n}}\in \mathscr B_{\bbb{-n,n}}(s,N)$ is such that for all $(i,\malefemale)\in\Omega_{\bbb{-n,n}}$ the restrictions of $\mathcal P_{\bbb{-n,n}}\bigl((i,\malefemale)\bigr)$ to $(\bbb{-n,n}\setminus I_n)^{\pm}$ are conditionally Mallows-distributed.    
    Thus \eqref{eq:offqt} in Lemma~\ref{lem:offMalBound} applies, so \eqref{eq:rbndij} is bounded from above by
    $$
      2\sum_{\substack{%
        i,j\in\bbb{-n,n}\colon\\
        |i-s|+|j-s|\geq N
      }}2q^{\frac{|i-s|+|j-s|}{20}}\leq 4\Bigl(\frac{2q^{N/40}}{1-q^{1/20}}\Bigr)^2,
    $$
    and therefore
    $$
      \mathbb P\Bigl(\mathscr A_{\bbb{-n,n}}\Bigm\vert \mathcal P_{\bbb{-n,n}}\in\mathscr B_{\bbb{-n,n}}(s,N)\Bigr)\geq 1-\Bigl(\frac{4q^{N/40}}{1-q^{1/20}}\Bigr)^2.
    $$
    The result follows by combining this with \eqref{eq:BsNbnd} and substituting into \eqref{eq:rholiminf}.
  \end{proof}
  \begin{proof}[Proof of Proposition~\ref{prop:rhoP}]
    That $\rho_q>0$ for all $q\in (0,1)$ follows by taking $N=N_q$ sufficiently large in Lemma~\ref{lem:rhoP}. 
    Taking $N=1$ and $q\to 0$ in the lemma yields that $\lim_{q\to 0}\rho_q=1$. 
  \end{proof}

  Having established the first of the two positive probabilities promised in this section, we turn to the second. 
  Our aim is to show that there is a positive probability that the random  preference structure $\LocPref(\MalPref_{q,\m Z},0)$ induced by a neighborhood of the origin has multiple stable matchings. 
  By the celebrated Gale-Shapley algorithm \cite{gale1962college}, $\LocPref(\MalPref_{q,\m Z},0)$ always has at least one stable matching. 
  That is,
  \begin{equation}\label{eq:grP}
    \gr(\MalPref_{q,\m Z},0)\geq 0.    
  \end{equation}


  Consider the unique preference structure $\mathcal Q$ on $\Omega_{\m Z}$ satisfying the following properties:
  \begin{itemize}
    \item individuals in $\Omega_{\m Z}\setminus \Omega_2$ rank everyone in order, and
    \item individuals in $\Omega_2$ rank individuals in $\Omega_{\m Z}\setminus \Omega_2$ in order, and
    \item the induced preference structure $\mathcal T$ on $\Omega_2$ is given by
    \begin{align*}
      \mathcal T\bigl((1,\female)\bigr)=12,\qquad &\mathcal T\bigl((2,\female)\bigr)\hspace{.25em}=21,\\
      \mathcal T\bigl((1,\male)\bigr)=21,\qquad &\mathcal T\bigl((2,\male)\bigr)=12.
    \end{align*}
  \end{itemize}
  In other words, preferences in $\mathcal Q$ are in order except that  $(2,\female)$ prefers $(1,\male)$ to $(2,\male)$ and $(1,\male)$ prefers $(1,\female)$ to $(2,\female)$. 
  Observe that for all $m\geq 2$, the induced preference structure $\mathcal Q_{\bbb{-m,m}}$ has exactly $2$ stable matchings: the in-order matching $\bigcup_{i\in\bbb{-m,m}}\bigl\{(i,\female),(i,\male)\bigr\}$ and
  $$
    \bigl\{(1,\female),(2,\male)\bigr\}\cup \bigl\{(2,\female),(1,\male)\bigr\}\cup\bigcup_{i\in\bbb{-m,m}\setminus\{1,2\}}\bigl\{(i,\female),(i,\male)\bigr\}.
  $$

  The proof of Proposition~\ref{prop:grBound} is quite similar to the proof of Proposition~\ref{prop:rhoP}, since both amount to obtaining lower bounds on the probability that the lattice cutpoint bound (or a close analogue thereof) is satisfied. 
  In light of this, we present certain steps of the proof of Proposition~\ref{prop:grBound} in a complete yet concise manner, with the understanding that a lengthier exposition of the steps in question appears in the proof of Proposition~\ref{prop:rhoP}.

  We recall the following notation for a preference structure $\mathcal P$ on $\Omega_{\m Z}$. \begin{itemize}
  \item $\Nbhd(\mathcal P,0)$ is the interval whose endpoints are the two essential lattice cutpoints surrounding $0$ (inclusive for the right endpoint and exclusive for the left). 
  \item $\LocPref(\mathcal P,0)$ is the preference structure induced by $\Nbhd(\mathcal P,0)$.
  \item $\gr(\mathcal P,0)$ is defined via
  $$
    \gr(\mathcal P,0)=\frac{\log\#\!\stab\LocPref(\mathcal P,0)}{\# \Nbhd(\mathcal P,0)}
  $$
  if $\Nbhd(\mathcal P,0)$ is finite and $\gr(\mathcal P,0)=0$ otherwise.
  \end{itemize}

  For an integer $i\in\m Z$ and a non-empty set $S\subseteq \m Z$, the \textbf{distance} from $i$ to $S$ is 
  $$
    d(i,S):=\min_{j\in S}|i-j|.
  $$
  \begin{proof}[Proof of Proposition~\ref{prop:grBound}]
    Fix $q\in (0,1)$ and let $\mathcal P=\MalPref_{q,\m Z}$ be a random Mallows-distributed preference structure on $\Omega_{\m Z}$. 
    By the previous proposition $\rho>0$, and thus by Lemma~\ref{lem:latDense} the set of essential lattice cutpoints has positive density. 
    Let $I$ be the a.s.\ finite interval whose endpoints are the essential lattice cutpoints of $\mathcal P$ surrounding the origin, exclusive on the left and inclusive on the right. 
    We show that $\gr(\mathcal P,0)>0$ with positive probability for all $q\in (0,1)$, which is equivalent showing that $\mathcal P_{I}$ has multiple stable matchings with positive probability. 

    Choose $m\geq 2$ sufficiently large such that the induced matching $\mathcal Q_{\bbb{-m,m}}$ satisfies the cutpoint bound \eqref{eq:bndForCut} at $\pm m$ (for the sake of concreteness, $m=50$ suffices). 
    Let $\mathscr E$ be the event that $\mathcal P_{\bbb{-m,m}}=\mathcal Q_{\bbb{-m,m}}$ and that for every finite interval of integers $J$ containing $\bbb{-m,m}$, the induced preference structure $\mathcal P_J$ satisfies the cutpoint bound at $\pm m$ as well.

    We claim that $\mathbb P(\mathscr E)>0$. 
    More precisely, we show that for all integers $N\geq 1$,
    \begin{equation}\label{eq:Ebnd}
      \mathbb P(\mathscr E)\geq q^2(1-q)^{8(N+m)^2}\phi(q)^{4(N+m)}\left(1-\Bigl(\frac{4q^{N/40}}{1-q^{1/20}}\Bigr)^2\right),
    \end{equation}
    by a calculation similar to the proof of Lemma~\ref{lem:rhoP}. This is positive for $N=N_q$ large enough.
    
    For all integers $n\geq m$, let $\mathscr A_{\bbb{-n,n}}'$ be the event that $\mathcal P_{\bbb{-n,n}}$ satisfies the lattice cutpoint bound \eqref{eq:strongCutBound} at $\pm m$ and that $\mathcal P_{\bbb{-m,m}}=\mathcal Q_{\bbb{-m,m}}$ (c.f.\ $\mathscr A_{\bbb{-n,n}}$ in the proof of Lemma~\ref{lem:rhoP}.)

    For all integers $n\geq m$, let $A_n'=\bigl\{i\in \bbb{-n,n}\colon d(i,\bbb{-m,m})<N\bigr\}$. 
    For all $i\in A_n'$, let
    $$
      I_{n,i}'=\bigl\{j\in \bbb{-n,n}\colon d(i,\bbb{-m,m})+d(j,\bbb{-m,m})<N\bigr\}
    $$
    (c.f.\ the sets $A_n$ and $I_{n,i}$ defined in the proof of Lemma~\ref{lem:rhoP}).

    Let $\mathscr B_{\bbb{-n,n}}'$ denote the set of preference structures $\mathcal R$ on $\Omega_{\bbb{-n,n}}$ such that, for all $i\in A_n$ and for both $\malefemale\in\{\male,\female\}$,
    $$
      \mathcal R\bigl((i,\malefemale)\bigr)\in\Res\Bigl(\mathcal Q_{\bbb{-n,n}}\bigl((i,\malefemale)\bigr),I_{n,i},\bbb{-n,n}\Bigr).
    $$
    By independence of the preferences $\mathcal P\bigl((i,\malefemale)\bigr)$ and Lemma~\ref{lem:fix},
    $$
      \mathbb P\bigl(\mathcal P_{\bbb{-n,n}}\in \mathscr B_{\bbb{-n,n}}'\bigr)\geq \Bigl(q\prod_{%
        \substack{%
          i\in A_n'%
        }} (1-q)^{\# I_{n,i}'}\phi(q)
        \Bigr)^2.
    $$
    (There is an extra factor of $q$, when compared with the calculation in Lemma~\ref{lem:rhoP}, since the restricted preference lists are taken to have a total of one inversion for each gender.) 
    Since $\# A_n'\leq 2(N+m)$ and $\sum_{i\in A_n'}\# I_{n,i}'\leq 4(N+m)^2$,
    \begin{equation}\label{eq:bnn'}
      \mathbb P\bigl(\mathcal P_{\bbb{-n,n}}\in \mathscr B_{\bbb{-n,n}}'\bigr)\geq q^2(1-q)^{8(N+m)^2}\phi(q)^{4(N+m)}.
    \end{equation}

    As in the proof of Lemma~\ref{lem:rhoP}, it follows from Lemma~\ref{lem:fix} that the conditional probability that $\mathscr A_{\bbb{-n,n}}'$ does \textit{not} occur given that $\mathcal P_{\bbb{-n,n}}$ belongs to $\mathscr B_{\bbb{-n,n}}'$ is bounded from above by
    $$
      2\sum_{%
        \substack{%
          i,j\in\bbb{-n,n}\colon\\
          d(i,\bbb{-m,m})+d(j,\bbb{-m,m})\geq N
        }} 2q^{\frac{d(i,\bbb{-m,m})+d(j,\bbb{-m,m})}{20}}\leq 4\Bigl(\frac{2q^{N/40}}{1-q^{1/20}}\Bigr)^2,
    $$
    and therefore 
    $$
      \mathbb P\Bigl(\mathscr A_{\bbb{-n,n}}'\Bigm\vert \mathcal P_{\bbb{-n,n}}\in \mathscr B_{\bbb{-n,n}}'\Bigr)\geq 1-4\Bigl(\frac{2q^{N/40}}{1-q^{1/20}}\Bigr)^2.
    $$
    Combining this with \eqref{eq:bnn'} yields \eqref{eq:Ebnd}. 
    Now taking $N=N_q$ sufficiently large yields that $\mathbb P(\mathscr E)>0$, as claimed.

    On $\mathscr E$, the lattice cutpoint bound holds at $\pm m$ and therefore every stable matching of $\mathcal P_I$ restricts to a stable (perfect) matching of $\mathcal P_{\bbb{-m,m}}$, yielding
    $$
      \#\!\stab \mathcal P_I\geq \#\!\stab \mathcal P_{\bbb{-m,m}}=\#\!\stab \mathcal Q_{\bbb{-m,m}}=2.
    $$
    Consequently,
    \begin{equation}\label{eq:2lwr}
      \mathbb P(\#\!\stab \mathcal P_I\geq 2)\geq \mathbb P(\mathscr E)>0.
    \end{equation}

    Finally, we  deduce from \eqref{eq:2lwr} that $\gr(\mathcal P,0)>0$ with positive probability. 
    Indeed, suppose to the contrary that $\gr(\mathcal P,0)=0$ a.s.\ 
    Then by stationarity, $\gr(\mathcal P,i)=0$ a.s.\ for all $i\in \m Z$ as well, whereupon it holds with probability $1$ that
    $$
      \log\#\!\stab\mathcal P_I\leq 
      \# I\sum_{i\in\bbb{-n,n}}\gr(\mathcal P,i)=0.
    $$
    This contradicts \eqref{eq:2lwr}, and thus $\gr(\mathcal P,0)>0$ with positive probability.
  \end{proof}


  \section{Proof of main theorem}\label{sec:expo}

    In this section we prove the main theorem, which states that for all $q\in (0,1)$, the number of stable matchings of $\MalPref_{q,n}$ grows exponentially in $n$, and the growth rate tends to $0$ as $q\to 0$. 
    Recall the quantity $\gr(\MalPref_{q,\m Z},0)$, previously introduced as a local approximation of the growth rate. 
    In Section \ref{sec:ergo} we used the ergodic theorem to show that the exponential growth rate of the number of stable matchings is given by
    $$
      \gr_q=\mathbb E\gr(\MalPref_{q,\m Z},0),
    $$
    for those values of $q\in (0,1)$ for which $\gr(\MalPref_{q,\m Z},0)$ is integrable. 
    Specifically, this follows by combining the results of Propositions~\ref{prop:main}, \ref{prop:rhoP}, and \ref{prop:grBound}. 
    To complete the proof of the main theorem, it remains to show that $\gr(\MalPref_{q,\m Z},0)$ is bounded for all $q\in (0,1)$, and that its mean tends to $0$ as $q\to 0$. 
    Throughout this section, functions of $q$ on $(0,1)$ tending to $0$ as $q\to 0$ are denoted by $o_q(1)$. 



    Working with essential lattice cutpoints directly is challenging, so as in the previous section we instead consider the set of locations at which the lattice cutpoint bound \eqref{eq:strongCutBound} holds. 
    This is a subset of the set of essential lattice cutpoints, and along the way to proving Proposition~\ref{lem:rhoP} we showed that it, too, has  positive density. 
    Working with this set of witnesses for an essential lattice cutpoints
    to conclude the following.

    \begin{lemma}\label{lem:offRbnd}
      Let $I\subseteq J$ be finite intervals of integers and let $\pi$ be a random permutation of $J$ with law $\Mal_{q,J}$, for $q\in [0,1)$. Then for all $j\in I$,
      $$
        \mathbb P\bigl(\offset(\pi,j)>\offset(\pi_I,j)\bigr)\leq \bigl(c_{1}+o_q(1)\bigr)q^{c_2d(j,\m Z\setminus I)},
      $$
      where $c_1,c_2>0$ are absolute constants and $o_q(1)$ tends to $0$ as $q\to 0$.
    \end{lemma}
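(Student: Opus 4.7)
The plan is to show that the event $\{\offset(\pi,j) > \offset(\pi_I,j)\}$ forces $j$ to participate in a ``missing'' inversion with some element of $J\setminus I$; such an element necessarily lies at distance at least $d := d(j, \mathbb Z\setminus I)$ from $j$, and inversions at distance $k$ in a Mallows permutation carry probability only $O(q^k)$ by Lemma~\ref{lem:offMalBound}. A union bound over $J\setminus I$ and summation of a geometric series will then give the claim.

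Concretely, abbreviate $L_{+-}^I := L_{+-}(\pi_I, j)$ and $L_{+-}^J := L_{+-}(\pi, j)$, and similarly for $L_{-+}$. Since $I \subseteq J$, the four counts are monotone under restriction (this is essentially Lemma~\ref{lem:diffLehm}$(ii)$), so $L_{+-}^I \leq L_{+-}^J$ and $L_{-+}^I \leq L_{-+}^J$. Hence if $\max(L_{+-}^I, L_{-+}^I) < \max(L_{+-}^J, L_{-+}^J)$, then at least one of these two inequalities is strict, which in turn forces the existence of some $i \in J\setminus I$ that participates in an inversion with $j$ under $\pi$ (i.e.\ $\pi(i) < \pi(j)$ if $i > j$, or $\pi(i) > \pi(j)$ if $i < j$). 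Since $j \in I$ and $I$ is an interval, any such $i$ satisfies $|i - j| \geq d$.

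Applying the union bound together with inequality \eqref{eq:offqt2} of Lemma~\ref{lem:offMalBound}, and noting that at most two integers lie at each fixed distance $k$ from $j$, I would conclude
\[
  \mathbb P\bigl(\offset(\pi,j) > \offset(\pi_I, j)\bigr) \;\leq\; \sum_{i \in J\setminus I} 4\, q^{|i-j|} \;\leq\; 8 \sum_{k = d}^{\infty} q^k \;=\; \frac{8\,q^d}{1-q}.
\]
Since $\tfrac{1}{1-q} = 1 + o_q(1)$ as $q \to 0$, this can be rewritten as $(8 + o_q(1))\,q^d$, proving the lemma with $c_1 = 8$ and $c_2 = 1$. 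The only step requiring a moment's care is the structural reduction in the second paragraph; that done, everything else is a direct union bound, and I do not anticipate a substantive obstacle.
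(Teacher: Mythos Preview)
Your proof is correct and follows essentially the same route as the paper's: both argue that $\offset(\pi,j)>\offset(\pi_I,j)$ forces a strict increase in one of $L_{+-}$ or $L_{-+}$, hence an inversion between $j$ and some $i\in J\setminus I$, and then apply \eqref{eq:offqt2} with a geometric union bound. The only cosmetic difference is that the paper treats $(J\setminus I)^+$ and $(J\setminus I)^-$ separately before combining, whereas you merge them into a single sum over $J\setminus I$.
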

    \begin{proof}
      If $\offset(\pi,j)>\offset(\pi_I,j)$, then either $L_{+-}(\pi,j)>L_{+-}(\pi_I,j)$ or $L_{-+}(\pi,j)>L_{-+}(\pi_I,j)$. 
      Suppose that the former occurs. 
      By definition of $L_{+-}$, it follows that there is some $i\in J\setminus I$ with $i>j$ and $\pi(i)<\pi(j)$. 
      Thus by the union bound and Lemma~\ref{lem:diffLehm} $(iii)$,
      \begin{align}\label{eq:l+-f}
        \mathbb P\bigl(L_{+-}(\pi,j)>L_{+-}(\pi_I,j)\bigr)&\leq
        \sum_{%
          \substack{%
            i\in J\setminus I\colon\\
            i>j
          }}\mathbb P\bigl(\pi(i)<\pi(j)\bigr)\nonumber\\
          &\leq \sum_{%
          \substack{%
            i\in (J\setminus I)^+
          }}4q^{i-j}=\frac{4q^{\max I+1-j}}{1-q}.
      \end{align}
      By a similar argument,
      \begin{equation}\label{eq:l+-s}
        \mathbb P\bigl(L_{-+}(\pi,j)>L_{-+}(\pi_I,j)\bigr)\leq \frac{4q^{j-\min I-1}}{1-q}.
      \end{equation}
      The result now follows by \eqref{eq:l+-f}, \eqref{eq:l+-s}, and the union bound.
    \end{proof}

    Fix $q\in (0,1)$. 
    For a finite interval $I\subset \m Z$ and a preference structure $\mathcal P$ on $\Omega_I$, let $\Bnd(\mathcal P)\subseteq \m Z+\tfrac12$ denote the set of half-integers at which the lattice cutpoint bound \eqref{eq:strongCutBound} holds for $\mathcal P$. 
    If $\mathcal P$ is a preference structure on $\Omega_{\m Z}$, then by definition of the lattice cutpoint bound we have that
    $$
      \Bnd(\mathcal P)=\bigcap_{%
        \substack{%
          i,j\in\m Z\colon\\
          i\leq j
        }}\Bnd(\mathcal P_{\bbb{i,j}}).
    $$

    \begin{lemma}\label{lem:resDiffBnd}
      For all $q\in (0,1)$, all $s\in\m Z+\tfrac12$, and all finite intervals $I\subset \m Z$,
      $$
        \mathbb P\Bigl(s\in \Bnd\bigl((\MalPref_{q,\m Z})_I\bigr)\setminus \Bnd(\MalPref_{q,\m Z})\Bigr)\leq \# I\cdot \bigl(c_{1}+o_q(1)\bigr) \cdot q^{c_2d(s,\m Z\setminus I)},
      $$
      where $c_1,c_2>0$ are absolute constants and $o_q(1)$ tends to $0$ as $q\to 0$.
    \end{lemma}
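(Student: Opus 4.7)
The plan is to expand the definitions, union-bound over witnesses to the failure of the lattice cutpoint bound at $s$, and combine Lemmas~\ref{lem:offRbnd} and~\ref{lem:offMalBound} via a geometric-mean interpolation to extract the $q^{c_2 d(s,\m Z\setminus I)}$ decay. Throughout I set $\mathcal P=\MalPref_{q,\m Z}$ and $T(i,j):=(|i-s|+|j-s|)/20$.

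First I will rephrase the failure event. By Lemma~\ref{lem:diffLehm}(ii), $\offset(\pi_K,j)$ is monotone non-decreasing in $K$, so by monotone continuity of probability the event $s\notin \Bnd(\mathcal P)$ is equivalent to the existence of $i,j\in\m Z$ and $\malefemale\in\{\male,\female\}$ with $O(i,j,\malefemale):=\sup_{J}\offset(\mathcal P_J((i,\malefemale)),j)\geq T(i,j)$, the supremum ranging over finite intervals $J\ni\{i,j\}$. On the event $s\in\Bnd(\mathcal P_I)$ we additionally have $\offset(\mathcal P_I((i,\malefemale)),j)<T(i,j)$ for every $i,j\in I$ and $\malefemale$.

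Next I will union-bound over the witness triple $(i,j,\malefemale)$, splitting into Case~A ($i,j\in I$) and Case~B ($i\notin I$ or $j\notin I$). In Case~A the conditioning on $s\in\Bnd(\mathcal P_I)$ forces the offset to strictly increase when passing from $I$ to $\m Z$, giving two bounds per term: the threshold bound $2q^{T(i,j)}$ from Lemma~\ref{lem:offMalBound} and the growth bound $(c_1+o_q(1))q^{c_2 d(j,\m Z\setminus I)}$ from Lemma~\ref{lem:offRbnd} (both passed to the supremum in $J$ via monotone continuity). Their geometric mean yields the hybrid estimate
$$
\mathbb P\bigl(O(i,j,\malefemale)\geq T(i,j),\ \offset(\mathcal P_I((i,\malefemale)),j)<T(i,j)\bigr)\leq (c_1'+o_q(1))q^{(T(i,j)+c_2d(j,\m Z\setminus I))/2}.
$$

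Summing this hybrid estimate over $i\in I$ absorbs the geometric factor $\sum_{i\in I} q^{|i-s|/40}\leq\sum_{i\in\m Z}q^{|i-s|/40}=O(1)$, producing a per-$(j,\malefemale)$ bound proportional to $q^{c_2 d(j,\m Z\setminus I)/2+|j-s|/40}$. By the triangle inequality $d(j,\m Z\setminus I)+|j-s|\geq d(s,\m Z\setminus I)$, this exponent is at least a positive absolute constant times $d(s,\m Z\setminus I)$; summing the resulting absolutely convergent series over $j\in I$ then gives a Case~A contribution of $(c_1+o_q(1))q^{c_3 d(s,\m Z\setminus I)}$, with constants independent of $\#I$. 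In Case~B, at least one of $|i-s|,|j-s|$ is at least $d(s,\m Z\setminus I)$, so $T(i,j)\geq d(s,\m Z\setminus I)/20$; Lemma~\ref{lem:offMalBound} combined with the factored double geometric sum then yields a Case~B bound of $O\bigl(q^{d(s,\m Z\setminus I)/20}\bigr)$, also independent of $\#I$. Combining and using $\#I\geq 1$ gives the claimed inequality. The main obstacle is extracting the $q^{c_2 d(s,\m Z\setminus I)}$ decay in Case~A: a naive union bound using only Lemma~\ref{lem:offRbnd} gives $\#I\cdot O(1)$ with no dependence on $d(s,\m Z\setminus I)$, and using only Lemma~\ref{lem:offMalBound} gives $O(1)$ with the same defect; the geometric-mean interpolation fuses the two tails so that the triangle inequality converts the $j$-dependent exponent of the growth bound into the desired $s$-dependent one. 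A minor technicality is the passage from finite-$J$ bounds (the setting of both input lemmas) to the supremum $O(i,j,\malefemale)$, which follows from monotone convergence on the nested increasing events.
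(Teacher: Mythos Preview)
Your proof is correct and follows the same broad outline as the paper's (union bound over witness triples $(i,j,\malefemale)$ to the failure of the lattice cutpoint bound, invoking Lemmas~\ref{lem:offMalBound} and~\ref{lem:offRbnd}, with monotone continuity handling the passage from finite $J$ to the supremum), but the two differ in how the $j$-dependent exponent of Lemma~\ref{lem:offRbnd} is converted into the desired $q^{c_2 d(s,\m Z\setminus I)}$ decay. The paper splits according to whether $|j-s|\leq d(s,\m Z\setminus I)/2$: in the ``far'' subcase Lemma~\ref{lem:offMalBound} alone already gives the decay, while in the ``near'' subcase $d(j,\m Z\setminus I)\geq d(s,\m Z\setminus I)/2$ so Lemma~\ref{lem:offRbnd} alone suffices, and a crude union over $i\in I$ is what produces the $\#I$ prefactor in the stated bound. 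You instead retain both estimates simultaneously via $\min(p_1,p_2)\leq\sqrt{p_1 p_2}$; this preserves a residual factor $q^{|i-s|/40}$, turning the sum over $i$ into a convergent geometric series rather than an $\#I$-fold count. Your bound is therefore strictly sharper (no $\#I$ needed), and you recover the stated inequality only at the end via $\#I\geq 1$. One point worth spelling out in your write-up: after the triangle inequality yields the lower bound $c_2 d(j,\m Z\setminus I)/2+|j-s|/40\geq c_3\,d(s,\m Z\setminus I)$, the phrase ``absolutely convergent series over $j\in I$'' must mean convergence uniform in $\#I$; this is obtained by splitting the exponent as $c_3'\,d(s,\m Z\setminus I)+c_3''|j-s|$ for suitable $0<c_3',c_3''$ and summing the geometric tail in $|j-s|$, not by using the lower bound $c_3\,d(s,\m Z\setminus I)$ alone.
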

    \begin{proof}
      Fix $q$, $I$, and let $\mathcal P=\MalPref_{q,\m Z}$. 
      If $s\not\in \Bnd(\mathcal P)$, then there exist $i,j\in \mathbb Z$ and $\malefemale\in\{\male,\female\}$ such that
      \begin{equation}\label{eq:viol}
        \offset\Bigl(\mathcal P\bigl((i,\malefemale\bigr),j\Bigr)\geq \frac{|i-s|+|j-s|}{20}.
      \end{equation}
      Let $\malefemale$, $i$, and $j$ be a triple satisfying \eqref{eq:viol}. Either
      \begin{enumerate}[$(i)$]
        \item $i\in \mathbb Z\setminus I$ or $|j-s|>d(s,\mathbb Z\setminus I)/2$, or
        \item $i\in I$ and $|j-s|\leq d(s,\mathbb Z\setminus I)/2$.
      \end{enumerate}

      By the union bound and \eqref{eq:offqt} from Lemma~\ref{lem:offMalBound}, we have that
      $$
        \mathbb P\bigl(\exists \ i\in \mathbb Z\setminus I,\ j\in\mathbb Z,\ \malefemale\in\{\male,\female\}\text{ satisfying }\eqref{eq:viol}\bigr)\leq 2\sum_{%
          \substack{%
            i\in\mathbb Z\setminus I\\
            j\in\mathbb Z}} 2q^{\frac{|i-s|+|j-s|}{20}}.
      $$
      Similarly, the probability that there exist $i\in \mathbb Z$, $j\in\mathbb Z$, and $\malefemale\in\{\male,\female\}$ satisfying $|j-s|>d(s,\mathbb Z\setminus I)/2$ and \eqref{eq:viol} is bounded from above by
      $$
        2\sum_{%
          \substack{%
            i,j\in\mathbb Z\colon\\
            |j-s|>d(s,\mathbb Z\setminus I)/2}} 2q^{\frac{|i-s|+|j-s|}{20}}=\bigl(c_3+o_q(1)\bigr)q^{c_4d(s,\mathbb Z\setminus I)},
      $$
      for some absolute constants $c_3,c_4>0$. 
      Adding these two bounds together yields an upper bound on the probability that some $i$, $j$, and $\malefemale$ satisfy $(i)$ and \eqref{eq:viol}.

      Finally, we bound the probability that $s\in\Bnd(\mathcal P_I)$ and that both $(ii)$ and \eqref{eq:viol} are satisfied for some $\malefemale$, $i$, and $j$. 
      If all these conditions are satisfied, then 
      \begin{equation}\label{eq:viol2}
        \offset\Bigl(\mathcal P\bigl((i,\malefemale\bigr),j\Bigr)\geq \frac{|i-s|+|j-s|}{20}>\offset\Bigl(\mathcal P_I\bigl((i,\malefemale\bigr),j\Bigr),
      \end{equation}
      and $|j-s|\leq d(s,\mathbb Z\setminus I)/2$. 
      By Lemma~\ref{lem:offRbnd}, the probability that \eqref{eq:viol2} holds for given $\malefemale$, $i$, and $j$, is exponentially small in $d(j,\mathbb Z\setminus I)$. Taking a union bound implies that the probability of $(ii)$, \eqref{eq:viol}, and $s\in \Bnd(\mathcal P_I)$ being simultaneously satisfied for some $\malefemale$, $i$, and $j$, is at most
      $$
        \#I\cdot \bigl(c_5+o_q(1)\bigr)\cdot q^{c_6d(s,\mathbb Z\setminus I)},
      $$
      for some absolute constants $c_5,c_6>0$. The result follows by combining this with case $(i)$. 
    \end{proof}

    \begin{proof}[Proof of Theorem~\ref{thm:exp}]
      Combine Propositions \ref{prop:main}, \ref{prop:rhoP} and \ref{prop:grBound} to
      get the convergence.

 
        If $0,1 \in \Bnd(\mathcal P)$ then both $0$ and $1$ are essential lattice cutpoints and $\Nbhd(\mathcal P,0)=\{0\}$. Thus
        $\gr(\MalPref_{q,\m Z},0)$. As $\gr(\MalPref_{q,\m Z},0)$ is bounded below by 0 and above by $\C$ we get that 
  $$\gr_q= \mathbb E(\gr(\MalPref_{q,\m Z},0))\leq 2\mathbb P(0 \not  \in \Bnd(\mathcal P))\C \leq 2(1-\rho_q)\C.$$
 By Lemma~\ref{lem:rhoP} the right hand side goes to zero. Since the left hand side is always positive it also converges to zero as $q \to 0$.
      
      It remains to prove the continuity of $\gr_q$ for $q<1$.
For any preference structure $\mathcal P$ we consider the triple $A =(a, b,\mathcal P')$ such that
$\bbb{a,b}=\Nbhd(\mathcal P,0)$ and
$\mathcal P'=\mathcal P_{\bbb{a,b}}$. Then it makes sense to write $\gr(A)$ and
$$\gr_q=\mathbb E \gr(\MalPref_{q,\m Z},0)=\sum_{A} \gr(A) \mathbb P_q(A).$$

Then

\begin{eqnarray*}
|\gr_q-\gr_{q'}|&=&|\mathbb E \gr(\MalPref_{q,\m Z},0)-\mathbb E \gr(\MalPref_{q',\m Z},0))|\\
&=&
\sum_{A} \gr(A) ( \mathbb P_q(A)-\mathbb P_{q'}(A) ) \\
&\leq&
\sum_{A} \gr(A) |\mathbb P_q(A)-\mathbb P_{q'}(A)|\\
&\leq&\sum_{A: \max(|a|,|b|)\leq N} \gr(A) |\mathbb P_q(A)-\mathbb P_{q'}(A)|\\
&&\hspace{1in}+
\sum_{A: \max(|a|,|b|)> N} \gr(A) |\mathbb P_q(A)-\mathbb P_{q'}(A)|.
\end{eqnarray*}
By \eqref{eq:trivBndGr}
 the first sum is at most
$\C$ times the total variation distance between
$$ \MalPref_{q, \bbb{-N,N}} \ \ \ \text{and} \ \ \ \MalPref_{q', \bbb{-N,N}}.$$
By the continuity of the finite Mallows distributions for any $\epsilon'>0$ 
these are within $\epsilon'$ in the total variation distance if $|q-q'|$ is sufficiently small.
The second sum is at most 
$$2\C \sup_{q^*<\gamma} \mathbb P_{q^*}(\Nbhd(\mathcal P,0) \not \subset \bbb{-N,N}).$$
This can be made arbitrarily small by Lemma \ref{lem:resDiffBnd}. As $\epsilon$ is arbitrary this proves continuity.
    \end{proof}

\section{Conclusions and Open Problems} 
There are a number of interesting open problems in the model considered in this paper and related models. 

\begin{itemize}
\item In our model it is natural to conjecture that women are in general matches to men of a similar rank. We believe it should not be too hard to prove such a statement.
\item Similarly we believe that for the Mallows model, if one compares the highest achievable rank of men matching to a woman (achieved by the women proposing algorithm) it should be close to the lowest achievable rank of men matching to a woman 
(achieved by the men proposing algorithm).
\item It is natural to consider other permutation models that may take into account geography or the preference of individuals not to be matched below a certain threshold. 
\end{itemize} 

  \bibliography{matchingsold}

\end{document}